\numberwithin{equation}{section}
\tikzstyle{vertex}=[circle,draw, inner sep=0pt, minimum size=1.5pt]
\newcommand{\vertex}{\node[vertex]}
\newcommand \cochord{\operatorname{co-chord}}
\newcommand \reg{\operatorname{reg}}
\newcommand \Tor{\operatorname{Tor}}
\newcommand \ab{\nu}
\newcommand \ca{\operatorname{c}}
\newcommand \ba{\operatorname{b}}
\newtheorem{theorem}{Theorem}[section]
\newtheorem{definition}[theorem]{Definition}
\newtheorem{proposition}[theorem]{Proposition}
\newtheorem{example}[theorem]{Example}
\newtheorem{obs}[theorem]{Observation}
\newtheorem{question}[theorem]{Question}
\newtheorem{remark}[theorem]{Remark}
\newtheorem{corollary}[theorem]{Corollary}
\begin{document}
\title[Regularity of powers of bipartite graphs]{Regularity of powers of bipartite graphs}
\author{A. V. Jayanthan}
\email{jayanav@iitm.ac.in}
\author{N. Narayanan}
\email{naru@iitm.ac.in}
\author{S. Selvaraja}
\email{selva.y2s@gmail.com}
\address{Department of Mathematics, Indian Institute of Technology
Madras, Chennai, INDIA - 60036}

\maketitle

\begin{abstract}
Let $G$ be a finite simple graph and $I(G)$ denote the corresponding
edge ideal. For all $s \geq 1$, we obtain upper bounds for
$\reg(I(G)^s)$ for bipartite graphs. We then compare the properties of
$G$ and $G'$, where $G'$ is the graph associated with the
polarization of the ideal $(I(G)^{s+1} : e_1\cdots e_s)$, where
$e_1,\ldots e_s$ are edges of $G$. Using these results, we explicitly
compute $\reg(I(G)^s)$ for several subclasses of bipartite graphs.
\end{abstract}

\section{Introduction}
Let $G = (V(G) , E(G))$ denote a finite simple undirected graph with vertices
$V(G) = \{x_1,\ldots,x_n\}$ and edge set $E(G)$. By identifying the vertices with the variables in the polynomial ring 
$k[x_1,\ldots, x_n],$ where $k$ is a field, we can associate to
each graph $G$ a monomial ideal $I(G)$ generated by the set 
$\{x_i x_j \mid \{x_i , x_j \}  \in E (G) \}$.
The ideal $I(G)$ is called the \textit{edge ideal} of $G$. This notion
was introduced by Villarreal in \cite{vill_cohen}.
Since then, the researchers have been investigating the connection
between the combinatorial properties of the graphs and the algebraic
properties of the corresponding edge ideals.
In particular, there have been active research on bounding the
homological invariants of edge ideals in terms of the
combinatorial invariants of the associated graphs, see for example 
\cite{banerjee}, \cite{huneke}, \cite{froberg}, \cite{ha_adam},
\cite{sean_thesis}, \cite{khosh_moradi}, \cite{kummini}, \cite{mohammad},
\cite{nevo_peeva}, \cite{adam}, \cite{russ}, \cite{Zheng}. 
In this article, we study the
Castelnuovo-Mumford regularity of powers of edge ideals of bipartite graphs. For a
homogeneous ideal $I$, we denote by $\reg(I)$, the Castelnuovo-Mumford
regularity, henceforth called regularity, of $I$.

It was proved by Cutkosky, Herzog and Trung, \cite{CHT},
and independently by Kodiyalam \cite{vijay}, that for a
homogeneous ideal $I$ in a polynomial ring, $\reg(I^s)$ is
a linear function for $s \gg 0$, i.e., there exist
integers $a$, $b$, $s_0$ such that  $$\reg(I^s) = as + b \text{ for
all } s \geq s_0.$$ It is known that $a$ is bounded above by the
maximum of degree of elements in a minimal generating set of $I$. But
a general bound for $b$ as well as $s_0$ is unknown.  
In this paper, we consider $I=I(G),$ the edge ideal of $G$.  In this
case, there exist integers $b$ and $s_0$ such that $\reg(I^s) = 2s +
b$ for all $s \geq s_0$.  Our
objective in this paper is to find $b$ and $s_0$ in terms of
combinatorial invariants of the graph $G$. We refer the reader to
\cite{selvi_ha} for a review of results in the literature which
identify classes of edge ideals for which $b$ and $s_0$ are explicitly
computed.

It is known that for any graph $G$,
\begin{equation}\label{ag_reg_chg}
\ab(G) + 1 \leq \reg(I(G)) \leq\cochord(G) + 1, 
\end{equation}
where $\ab(G)$ denote the induced matching number of
$G$ and $\cochord(G)$ denote the co-chordal cover number of $G$.
The lower bound was proved by Katzman, \cite{kat} and the upper bound
was proved by Woodroofe, \cite{russ}. Beyarslan, H\`{a} and Trung
proved that for
any graph $G$ and $s \geq 1$, $2s+\ab(G)-1 \leq \reg(I(G))^s$, \cite{selvi_ha}. They also proved that the equality holds for
edge ideals of forests (for all $s \geq 1$) and cycles (for all $s \geq
2$). 
Moghimian, Sayed and Yassemi have shown that
the equality holds for edge ideals of whiskered cycles as well,
\cite{moghimian_seyed_yassemi}.

There is no general upper bound known for $\reg(I(G)^s)$. Woodroofe's
inequality, (\ref{ag_reg_chg}), suggests $\reg(I(G)^s) \leq
2s+\cochord(G) - 1$ for all $s \geq 1$. We prove this inequality for
bipartite graphs. Using this 
we discover new classes of graphs for which $b$ and $s_0$ can be
computed explicitly.  We determine several classes of graphs for which
the equality $$\reg(I(G)^s)=2s+b$$ holds, for every $s \geq 1$, i.e.,
$s_0=1$ and $b$ is explicitly described using combinatorial invariants
associated with $G$. One of the central ideas in our proofs is the comparison
of certain properties of a graph $G$ with those of another associated graph
$G'$. Let $G$ be a graph and $e_1, \ldots,
e_s$ be edges (not necessarily distinct) of $G$, $s \geq 1$. Banerjee, in
\cite{banerjee}, introduced the notion of \textit{even-connection} with
respect to the $s$-fold product $e_1\cdots e_s$, see Definition
\ref{even_connected}. He showed
that $(I(G)^{s+1} : e_1\cdots e_s)$ is a quadratic monomial ideal,
and hence its polarization corresponds to a graph, say $G'$. 
Then Banerjee showed that $G'$ is the union of $G$ with all the
even-connections with respect to the $s$-fold product $e_1\cdots
e_s$. Though $G'$ has been obtained from $G$ through an
algebraic operation, some of the combinatorial properties
seem to be comparable. 
The ideal $I(G')$ has emerged as a good tool in the study of
asymptotic regularity of the edge ideals, see \cite{banerjee1}, \cite{banerjee},
\cite{selvi_ha}, \cite{mohammad}. The comparison between $G$ and $G'$
(equivalently, between $I(G)$ and $I(G')$)
provides a tool to compute upper bound for the regularity.
If $G$ is an arbitrary graph, $e$ is an edge
in $G$ and $G'$ is the graph associated with the polarization of
$(I(G)^2 : e)$, then one of the main results states
that $\cochord(G') \leq \cochord(G)$ (Theorem \ref{cochord}). 
Alilooee and Banerjee proved that if $G$ is
bipartite, then so is $G'$, \cite{banerjee1}. 
Also, Banerjee proved that $\reg(I(G)^{s+1}) \leq \max
\{\reg(I(G')+2s, \reg(I(G)^s\}$, \cite{banerjee}. 
We use these results to get an upper bound for the regularity of
$I(G)^s$ when $G$ is a bipartite graph:

\begin{theorem}[Theorem \ref{upper_bound_reg}]
Let $G$ be a bipartite graph. Then for all $s \geq 1$, we have
\begin{equation}\label{eq:1}
\reg(I(G)^s) \leq 2s + \cochord (G) -1.
\end{equation}
\end{theorem}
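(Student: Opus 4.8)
The plan is to argue by induction on $s$. For the base case $s=1$, inequality (\ref{eq:1}) reads $\reg(I(G)) \le \cochord(G)+1$, which is precisely the upper bound in Woodroofe's inequality (\ref{ag_reg_chg}); note that this step uses neither bipartiteness nor any of the machinery of even-connection. So I would dispose of $s=1$ immediately and assume henceforth that (\ref{eq:1}) holds for the exponent $s$ for every bipartite graph, and consider $\reg(I(G)^{s+1})$.

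The engine of the inductive step is Banerjee's inequality $\reg(I(G)^{s+1}) \le \max\{\reg(I(G'))+2s,\ \reg(I(G)^s)\}$, where, for a fixed $s$-fold product $e_1\cdots e_s$ of edges of $G$, the graph $G'$ is the one associated to the polarization of $(I(G)^{s+1}:e_1\cdots e_s)$ (the even-connection graph of Definition \ref{even_connected}), and the maximum is taken over all such products. I would bound the two terms separately. By the induction hypothesis the term $\reg(I(G)^s)$ is at most $2s+\cochord(G)-1 \le 2(s+1)+\cochord(G)-1$, so it is already dominated by the target bound. For the first term, since $G$ is bipartite the graph $G'$ is bipartite by the theorem of Alilooee and Banerjee, and applying Woodroofe's inequality to $G'$ gives $\reg(I(G')) \le \cochord(G')+1$, whence $\reg(I(G'))+2s \le \cochord(G')+1+2s$.

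Putting these reductions together, the theorem follows the moment one establishes the purely combinatorial inequality $\cochord(G') \le \cochord(G)$ for the even-connection graph $G'$: it then yields $\reg(I(G'))+2s \le \cochord(G)+1+2s = 2(s+1)+\cochord(G)-1$, so both entries of Banerjee's maximum sit below the desired right-hand side and the induction closes. This co-chordal comparison is therefore the heart of the argument, and it is where I expect the genuine difficulty to lie.

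Theorem \ref{cochord} supplies exactly this inequality when $s=1$, i.e. for the single-edge colon $(I(G)^2:e)$, and does so for arbitrary $G$. The main obstacle is to upgrade it to the $s$-fold even-connection graph, and I suspect this is precisely the place where bipartiteness becomes essential — the full statement is claimed only for bipartite $G$ even though the $s=1$ prototype is unrestricted. I would attempt the upgrade by exploiting the colon identity $(I(G)^{s+1}:e_1\cdots e_s)=(\cdots((I(G)^{s+1}:e_1):e_2)\cdots:e_s)$ together with the combinatorics of even-connection, trying to realize $G'$ through a chain $G=G_0,G_1,\ldots,G_s=G'$ in which each link adjoins the even-connections of a single edge, and invoking Theorem \ref{cochord} (and preservation of bipartiteness) at each step. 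The delicate point is that the ambient power of the ideal changes along this chain, so one cannot merely iterate the $s=1$ statement formally; controlling how even-connections with respect to a product compose will require real work on the defining paths. Should the chain argument prove too lossy, the fallback I would pursue is to construct a co-chordal cover of $G'$ directly from a minimum co-chordal cover of $G$, assigning each even-connection edge $\{u,v\}$ to the cover member that already contains an appropriate edge of $G$ lying on an even-connecting $u$–$v$ path, and checking that this reassignment keeps every piece co-chordal without increasing their number.
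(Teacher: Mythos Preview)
Your overall architecture matches the paper's proof exactly: induction on $s$, Woodroofe for the base case, Banerjee's inequality (Theorem~\ref{regularity_compa}) for the inductive step, and the reduction to the combinatorial comparison $\cochord(G')\le\cochord(G)$ for the even-connection graph $G'$ associated to $(I(G)^{s+1}:e_1\cdots e_s)$. You also correctly pinpoint that Theorem~\ref{cochord} handles only the single-edge case and that bipartiteness must enter precisely when passing to $s>1$.

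The one genuine gap is the iteration, and the identity you wrote down does not help: $(I(G)^{s+1}:e_1\cdots e_s)=(\cdots((I(G)^{s+1}:e_1):e_2)\cdots:e_s)$ is trivially true but leaves you with the same power $s+1$ at every stage, so Theorem~\ref{cochord} never applies. What the paper uses instead (Corollary~\ref{bipartite_chord_com}) is the nontrivial identity
\[
(I(G)^{s+1}:e_1\cdots e_s)=\bigl(\cdots\bigl((I(G)^{2}:e_1)^{2}:e_2\bigr)^{2}\cdots\bigr)^{2}:e_s,
\]
proved by Alilooee and Banerjee for bipartite graphs (\cite[Lemma~3.7]{banerjee1}). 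This rewrites the $s$-fold colon as an honest chain of single-edge squarings, so one may set $G_0=G$ and let $G_j$ be the graph of $(I(G_{j-1})^2:e_j)$; each $G_j$ is bipartite by Theorem~\ref{even_con_bipartite}, and Theorem~\ref{cochord} gives $\cochord(G_j)\le\cochord(G_{j-1})$ at every step. The paper also shows by example that this identity fails for non-bipartite graphs, confirming your intuition about where bipartiteness is essential. Your fallback direct-construction idea is unnecessary once this lemma is in hand.
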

We also compare
certain properties and invariants, algebraic as well as combinatorial,
of $G$ and $G'$ for several subclasses of bipartite graphs.  We prove
that if $G$ is either unmixed bipartite (Theorem \ref{unmi_colon}) or
$P_k$-free bipartite (Theorem \ref{bipartite_pk}) or $nK_2$-free
(Corollary \ref{nK2-free}), then so is $G'$. We also prove that the
induced matching number of $G'$ is at most that of $G$ (Proposition \ref{even_con_ind}). As a
consequence, we obtain an upper bound for $\reg(I(G)^s)$ when $G$
is a bipartite graph (Corollary \ref{upperbound_ind}).
Comparison between the graphs $G$ and $G'$ yields yet another
positive result, namely, a partial answer to a question posed by Banerjee,
\cite[Question 6.2.2]{banerjee_thesis}, on classifying all graphs $G$
and edges $e_1 \cdots e_s$ such that $$ \reg(I(G)) \geq
\reg (I(G)^{s+1}:e_1 \cdots e_s) \text{ for all } s \geq 1.$$ 
We obtain some sufficient conditions for this inequality to be true,
(Proposition \ref{reg-g-g'}) and as a consequence we prove that this
inequality holds true for unmixed bipartite graph, chordal bipartite,
whiskered bipartite graph, bipartite $P_6$-free graphs and connected
bipartite graphs with regularity equal to three. 

We then move on to compute precise expressions for the regularity of
powers of edge ideals.  In \cite{selvi_ha}, the authors raised the
question, for which graphs $G$, $\reg(I(G)^s) = 2s + \ab(G) - 1$ for
$s\gg 0$.  We observe that for certain classes of bipartite graphs,
the induced matching number coincides with the co-chordal cover
number, for example, unmixed bipartite, chordal bipartite and
whiskered bipartite.  We then use the upper bound (\ref{eq:1}) for
such classes of graphs to get $\reg(I(G)^s) = 2s + \ab(G) - 1$ for all
$s \geq 1$, (Corollary \ref{thm:ag=cg}).  As an immediate consequence,
we derive one of the main results of \cite{selvi_ha}, that the above
equality holds for forests. We also derive the main result of Alilooee
and Banerjee, in \cite{banerjee1}, that equality holds true for
connected bipartite graphs $G$ with $\reg(I(G)) = 3$.

The classes of graphs discussed earlier  have the property $\ab(G) =
\cochord(G)$. For bipartite $P_6$-free graphs, it not known whether
this equality holds true. However we prove that for such graphs, for all $s
\geq 1$, $\reg(I(G)^s) = 2s + \ab(G) - 1$. It was shown by Jacques,
\cite{sean_thesis}, that if $n \equiv 2(mod~3)$, then $\reg(I(C_n)) =
\ab(C_n) + 2$.  And, Beyarslan, H\`a and Trung proved that
$\reg(I(C_n)^s) = 2s + \ab(C_n) - 1$ for all $s \geq 2$,
\cite{selvi_ha}. If $G$ is the disjoint union of $C_{n_1}, \ldots,
C_{n_m}$ and $k$ edges, for some $k \geq 1$, then we obtain a precise
expression for $\reg(I(G)^s)$, (Theorem \ref{upperbound}). 
We also construct,
for each $t \geq 1$, a graph $G_t$ such that $\reg(I(G_t)^s) - [2s +
  \ab(G_t) - 1] = t$, (Example \ref{ex_reg=cochord}).
\section{Preliminaries}
\par In this section, we set up the basic definitions and notation needed for the main results. Let $G$ be a finite simple graph with vertex set $V(G)$
and edge set $E(G)$. 
 A subgraph $H \subseteq G$  is called \textit{induced} if $\{u,v\}$ is an edge of $H$ 
if and only if $u$ and $v$ are vertices of $H$ and $\{u,v\}$ is an edge of $G$.
For a vertex $u$ in a graph $G$, let $N_G(u) = \{v \in V (G)|\{u, v\} \in E(G)\}$ be the set of
neighbors of $u$. 
The \textit{complement} of a graph $G$, denoted by $G^c$, is the graph on the same
vertex set in which $\{u,v\}$ is an edge of $G^c$ if and only if it is not an edge of $G$.
A subset
$X$ of $V(G)$ is called \textit{independent} if there is no edge $\{x,y\} \in E(G)$ 
for $x, y \in X$. The independence number $\alpha(G)$ is the maximum size of an independent set.
Let $C_k$ denote the cycle on $k$ vertices and $P_k$ denote the path on $k$ vertices.
The \textit{length} of a  path, or cycle is its number of edges.

A graph $G$ is called \textit{bipartite} if there are
two disjoint independent subsets $X, Y$ of $V(G)$ such that $X \cup Y
= V(G)$. 
 \par 
Let $G$ be a graph. We say $n$ non-adjacent edges
  $\{f_1,\ldots,f_n\}$ form an $nK_2$ in $G$ if $G$ does not have an edge with one 
endpoint in $f_i$ and the other in $f_j$ for all $i,j \in \{1,\ldots,n\}$ and $i \neq j$. A graph without $nK_2$ is called $nK_2$-free. If $n$ is $2$, then
$2K_2$-free graph also called \textit{gap-free} graph.
It is easy to see that, $G$ is gap-free if and only if $G^c$ contains no induced $C_4$.
Thus, $G$ is gap-free if and only if it does not contain two vertex-disjoint edges as
an induced subgraph.

\par A \textit{matching} in a graph $G$ is a subgraph consisting of pairwise disjoint edges. The largest size of a 
  matching in $G$ is called its matching number and denoted by $\ca(G)$ and
the minimum matching number of $G$, denoted by $\ba(G)$, is the minimum
cardinality of the maximal matchings of $G$. If the
subgraph is an induced subgraph, the matching is an \textit{induced matching}. The largest size of an induced matching in $G$ is called its induced 
matching number and denoted by $\ab(G)$. 

Let $G$ be a graph. A subset $C \subseteq V(G)$ is a vertex cover  of
$G$ if for each $e \in E(G)$, $e\cap C \neq \phi$.  If $C$ is minimal
with respect to inclusion, then $C$ is called \textit{minimal vertex
cover} of $G$.  A graph $G$ is called \textit{unmixed (also called
well-covered)} if all minimal vertex covers of $G$ have the same
number of elements. 

For a graph $G$ on $n$ vertices, let $W(G)$ be the \textit{whiskered
graph} on $2n$ vertices obtained by adding a pendent vertex (an edge
to a new vertex of degree $1$) to every vertex
of $G$.

A graph $G$ is \textit{weakly chordal} if every induced cycle in both
$G$ and $G^c$ has length at most $4$
and $G$ is \textit{chordal bipartite} if it is simultaneously weakly chordal  
and bipartite. Equivalently, a bipartite graph is chordal bipartite if
and only if it has no induced cycle on six or more vertices.

For any undefined terminology and further basic properties of graphs,
we refer the reader to \cite{west}.

\begin{example}
Let $G$ be the graph with vertices $V(G)=\{x_1,\ldots,x_6\}$ given
below. 

\begin{minipage}{\linewidth}
  \captionsetup[figure]{labelformat=empty}
 \begin{figure}[H]
  
 \[\begin{tikzpicture}
	\vertex [fill] (x_1) at (2,4) [label=above:$x_1$] {};  
	\vertex [fill] (x_2) at (4,4) [label=above:$x_2$] {};
	\vertex [fill] (x_3) at (5.5,2) [label=right:$x_3$] {};
	\vertex [fill] (x_4) at (4,0) [label=right:$x_4$] {};
	\vertex [fill] (x_5) at (2,0) [label=below:$x_5$] {};
	\vertex [fill] (x_6) at (0.5,2) [label=left:$x_6$] {};
	\path
		(x_1) edge (x_2)
		(x_2) edge (x_3)
		(x_3) edge (x_4)
		(x_4) edge (x_5)
		(x_5) edge (x_6)
		(x_6) edge (x_1)
		(x_3) edge (x_6)
		(x_2) edge (x_5)
		 ;  
	 \node[text width=9.5cm] at (11.5,2)
	{Then $\bigg\{ \{x_2,x_3\},\{x_5x_6\} \bigg \}$ forms a matching, but not an induced matching since the induced
subgraph with vertices $\{x_2,x_3,x_5,x_6\}$ contains edges $\{x_3,x_6\}$ and $\{x_2,x_5\}$. 
The set $\big\{\{x_1,x_2\},\{x_3,x_4\},\{x_5,x_6\}\big \}$ forms a matching of $G$ and  $\big\{ \{x_2,x_5\}, \{x_3,x_6\} \big \}$
also form a matching, 
the set $\{x_1,x_3,x_5\}$
forms an independent set of $G$. It is not hard to verify that $\ca(G)=3, \ba(G)=2,\ab(G)=1$ and $\alpha(G)=3$ .
It can also be noted that
$\{x_2,x_3,x_5, x_6\}$ and $\{x_2, x_4, x_6\}$ are minimal vertex
covers of $G$. Therefore $G$ is not unmixed.};
\end{tikzpicture}\]
\end{figure}
\end{minipage}
\end{example}

We recall the definition of even-connectedness and some of its important
properties from \cite{banerjee}.
\begin{definition}\label{even_connected} Let $G$ be a graph. Two vertices $u$ and $v$ ($u$ may be same as $v$) are said to be even-connected with respect to an $s$-fold products 
$e_1\cdots e_s$, where $e_i$'s are edges of $G$, not necessarily distinct, if there is a path $p_0p_1\cdots p_{2k+1}$, $k\geq 1$ in $G$ such that:
\begin{enumerate}
 \item $p_0=u,p_{2k+1}=v.$
 \item For all $0 \leq \ell \leq k-1,$ $p_{2\ell+1}p_{2\ell+2}=e_i$ for some $i$.
 \item For all $i$, $ \mid\{\ell \geq 0 \mid p_{2\ell+1}p_{2\ell+2}=e_i \}\mid
   ~ \leq  ~ \mid \{j \mid e_j=e_i\} \mid$.
 \item For all $0 \leq r \leq 2k$, $p_rp_{r+1}$ is an edge in $G$.
\end{enumerate}
\end{definition}
\begin{example}Let $I(G)=(x_1x_2,x_1x_5,x_2x_5,x_2x_3,x_3x_4,x_4x_5)\subset k[x_1,\ldots,x_5]$. Then $(I(G)^2:x_2x_5)=I(G)+(x_1^2,x_1x_3,x_1x_4)$. Note that, 
$x_1$ is even-connected to itself and $\{x_1,x_3\}$, $\{x_1,x_4\}$ are even-connected with respect to $x_2x_5$.
\end{example}
The following theorem, due to Banerjee, is used repeatedly throughout this paper.
\begin{theorem} \label{regularity_compa}\cite[Theorem 5.2]{banerjee} For any finite simple graph $G$ and any $s \geq 1$, let the set of minimal 
monomial generators of $I(G)^s$ be $\{m_1,\ldots,m_k\}$, then 
\begin{center}
$\reg(I(G)^{s+1}) \leq \max \{\reg(I(G)^{s+1}:m_\ell)+2s,~1 \leq \ell \leq k, ~\reg(I(G)^s)\}$. 
\end{center}
\end{theorem}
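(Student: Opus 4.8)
The plan is to interpolate between $I^{s+1}$ and $I^s$ by a chain of monomial ideals built from the generators $m_1,\dots,m_k$ and to run short exact sequences across that chain. Write $R=k[x_1,\dots,x_n]$ and $I=I(G)$, and note that every $m_\ell$ has degree $2s$. Set $K_0=I^{s+1}$ and $K_\ell=I^{s+1}+(m_1,\dots,m_\ell)$ for $1\le\ell\le k$. Since $(m_1,\dots,m_k)=I^s\supseteq I^{s+1}$ we get $K_k=I^s$, so
\[
I^{s+1}=K_0\subseteq K_1\subseteq\cdots\subseteq K_k=I^s
\]
is a filtration interpolating between the two powers we wish to compare.

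For each $\ell$ the relation $K_\ell=K_{\ell-1}+(m_\ell)$ gives, via multiplication by $m_\ell$, the short exact sequence
\[
0\longrightarrow \frac{R}{(K_{\ell-1}:m_\ell)}(-2s)\xrightarrow{\ \cdot m_\ell\ }\frac{R}{K_{\ell-1}}\longrightarrow\frac{R}{K_\ell}\longrightarrow 0 .
\]
Using the elementary fact that $\reg B\le\max\{\reg A,\reg C\}$ for any short exact sequence $0\to A\to B\to C\to 0$, a straightforward iteration of these inequalities yields
\[
\reg\!\left(R/I^{s+1}\right)\le\max\Big\{\reg\!\left(R/I^{s}\right),\ \max_{1\le\ell\le k}\big[\reg\!\left(R/(K_{\ell-1}:m_\ell)\right)+2s\big]\Big\}.
\]
Converting via $\reg(R/J)=\reg(J)-1$ reproduces the stated inequality, \emph{provided} each colon $(K_{\ell-1}:m_\ell)$ may be replaced by $(I^{s+1}:m_\ell)$.

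That replacement is the main obstacle. Since colon distributes over sums of monomial ideals, $(K_{\ell-1}:m_\ell)=(I^{s+1}:m_\ell)+\big((m_1,\dots,m_{\ell-1}):m_\ell\big)$, where the second summand is generated by the proper divisors $m_i/\gcd(m_i,m_\ell)$, $i<\ell$; these need not lie in $(I^{s+1}:m_\ell)$ (already for the path $P_3$ one picks up a bare variable), so the two colons are genuinely distinct. The crux is therefore a purely monomial lemma asserting that adjoining these extra generators cannot raise the regularity, that is $\reg\big(R/(K_{\ell-1}:m_\ell)\big)\le\reg\big(R/(I^{s+1}:m_\ell)\big)$. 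This is the step I expect to be hard: adjoining a single generator $w$ only gives the a priori bound $\reg(R/(J+(w)))\le\max\{\reg(R/J),\ \reg(R/(J:w))+\deg w-1\}$, which does not by itself forbid an increase, so one must control $\reg(R/(J:w))$ using the concrete structure of $J=(I^{s+1}:m_\ell)$ — namely its even-connection description as (the polarization of) the quadratic edge ideal of the associated graph $G'$, inside which $I\subseteq J$ and the divisors interact in a controlled way. I would run this as an induction on the number of adjoined divisors, verifying at each stage that the iterated colons $(J:w)$ remain edge-ideal-like and do not force a regularity increase; making this combinatorial control precise, rather than the formal homological bookkeeping of the filtration, is the real content of the theorem.
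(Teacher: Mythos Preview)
This theorem is not proved in the paper; it is quoted from \cite{banerjee}. Your filtration $K_\ell=I^{s+1}+(m_1,\dots,m_\ell)$ and the short exact sequences across it are exactly Banerjee's setup, and you have correctly isolated the single nontrivial step: bounding $\reg\big(R/(K_{\ell-1}:m_\ell)\big)$ by $\reg\big(R/(I^{s+1}:m_\ell)\big)$.

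Where you go astray is in the proposed resolution. Banerjee's argument at this point is purely monomial-order-theoretic and uses no graph structure whatsoever; it in fact applies to any ideal generated in a single degree. The lemma you are missing is that if the generators $m_1,\dots,m_k$ of $I^s$ are listed in \emph{decreasing} order with respect to a fixed monomial order, then for every $\ell$,
\[
(K_{\ell-1}:m_\ell)\;=\;(I^{s+1}:m_\ell)+(\text{an ideal generated by variables}).
\]
The ordering is essential: with an arbitrary listing the summand $((m_1,\dots,m_{\ell-1}):m_\ell)$ can contribute irredundant higher-degree generators (already for $I=(x_1x_2,x_2x_3)$ and $s=2$ one obtains an irredundant $x_1^2$ if $x_1x_2^2x_3$ is placed last), but under the monomial order every such generator is dominated by a variable already forced into the colon by some earlier $m_j$. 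Once this lemma is established, the elementary fact that $\reg\big(J+(\text{variables})\big)\le\reg(J)$ for monomial $J$ (polarize and observe that restricting a Stanley--Reisner complex to an induced subcomplex cannot raise regularity) finishes the proof immediately. Your intended route through the even-connection description of $G'$ and an induction on adjoined divisors is therefore unnecessary; being specific to edge ideals, it could not in any case recover the argument in its natural generality.
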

Next theorem describes all the minimal generating set of an ideal $(I(G)^{s+1}:M)$, where $M$ is minimal generator of $I(G)^s$ for $s\geq 1$.
\begin{theorem}\label{even_connec_equivalent}\cite[Theorem 6.1 and Theorem 6.7]{banerjee} Let $G$ be a graph with edge ideal
$I = I(G)$, and let $s \geq 1$ be an integer. Let $M$ be a minimal generator of $I^s$.
Then $(I^{s+1} : M)$ is minimally generated by monomials of degree 2, and $uv$ ($u$ and $v$ may
be the same) is a minimal generator of $(I^{s+1} : M )$ if and only if either $\{u, v\} \in E(G) $ or $u$ and $v$ are even-connected with respect to $M$.
 \end{theorem}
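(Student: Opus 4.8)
The plan is to prove the two assertions in turn: first that $(I^{s+1}:M)$ is generated in degree two, and then the even-connection description of its degree-two generators. Write $M = e_1\cdots e_s$ as a product of $s$ edge-monomials, so $\deg M = 2s$. Since a colon of monomial ideals is again a monomial ideal, it suffices to test monomials, and the basic translation is that a monomial $m$ lies in $(I^{s+1}:M)$ exactly when $mM$ is divisible by a product of $s+1$ edges of $G$. I would keep this divisibility criterion as the working definition throughout.

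First I would dispose of the cheap containments and the degree-$1$ case. Because $M\in I^s$, we have $IM\subseteq I^{s+1}$, so every edge $\{u,v\}\in E(G)$ yields $uv\in(I^{s+1}:M)$. There can be no generator of degree $1$: if $u\in(I^{s+1}:M)$ then $uM\in I^{s+1}$, yet $\deg(uM)=2s+1<2s+2$ and no degree-$(2s+1)$ monomial is divisible by a degree-$(2s+2)$ generator of $I^{s+1}$. Hence every minimal generator already has degree $\ge 2$.

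For the \emph{if} direction of the characterization, suppose $u$ and $v$ are even-connected through a path $p_0p_1\cdots p_{2k+1}$ with $p_0=u$, $p_{2k+1}=v$. The idea is to display $uvM$ explicitly as a product of $s+1$ edges. Group the path edges as $g_\ell=\{p_{2\ell},p_{2\ell+1}\}$ for $0\le\ell\le k$ (these are $k+1$ edges of $G$ by condition (4)) and $e_{i_\ell}=\{p_{2\ell+1},p_{2\ell+2}\}$ for $0\le\ell\le k-1$ (which lie among the $e_i$ by condition (2)). Telescoping the vertex exponents along the path yields the monomial identity $uv\cdot\prod_{\ell=0}^{k-1}e_{i_\ell}=\prod_{\ell=0}^{k}g_\ell$. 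The multiplicity bound in condition (3) ensures the $e_{i_\ell}$ can be matched to distinct edge-factors of $M$, so multiplying by the remaining $s-k$ edges of $M$ exhibits $uvM$ as a product of $(k+1)+(s-k)=s+1$ edges; thus $uv\in(I^{s+1}:M)$.

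The two substantial parts are degree-two generation and the \emph{only if} direction, and I would handle them through a single combinatorial device. If $m=uv$ with $mM\in I^{s+1}$, then $mM$ is divisible by a product $f_1\cdots f_{s+1}$ of edges, and comparing degrees forces the monomial equality $uv\cdot e_1\cdots e_s=f_1\cdots f_{s+1}$. Viewing these two edge-factorizations of a common monomial as a multigraph, I would run an alternating-path argument: starting at $u$, which (when $\{u,v\}\notin E(G)$) cannot be paired with $v$ inside any $f_j$, follow an $f$-edge to a vertex whose surplus exponent must be absorbed by some $e_i$-edge, then alternate and iterate; tracking the exponent vectors shows this walk terminates at $v$ and meets the even-connection conditions, in particular the multiplicity constraint (3). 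The same exchange argument, applied to a hypothetical minimal generator $m$ of degree $\ge 3$, locates a degree-two submonomial $uv\mid m$ with $uvM\in I^{s+1}$, contradicting minimality and forcing every minimal generator to have degree exactly two. The main obstacle is exactly this bookkeeping: guaranteeing the alternating path is well defined at each step and that condition (3) is satisfied rather than violated, which is where the exponent-vector accounting must be carried out with care.
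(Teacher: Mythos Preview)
This theorem is not proved in the present paper: it is quoted from \cite{banerjee} (Theorems~6.1 and~6.7 there) and used as a black box, so there is no in-paper proof to compare your proposal against. For what it is worth, your outline---the degree-count ruling out linear generators, the telescoping identity $uv\cdot\prod_{\ell}e_{i_\ell}=\prod_{\ell}g_\ell$ for the ``if'' direction, and an alternating $e$/$f$ exchange walk for the converse and for degree-two generation---is exactly the strategy Banerjee follows in the original source; the delicate part, as you flag, is the exponent bookkeeping that guarantees the walk is well defined at each step and that condition~(3) holds, and that is precisely where the bulk of Banerjee's argument lies.
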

Further, Alilooee and Banerjee studied the even-connection in the
context of bipartite graphs and showed that they behave well under
even-connections.
 \begin{theorem}\label{even_con_bipartite}\cite[Proposition 3.5]{banerjee1} Let $G$ be a bipartite graph and $s \geq 1$ be an integer. Then for
every $s$-fold product $e_1\cdots e_s$, $(I(G)^{s+1}:e_1\cdots e_s)$ is a quadratic squarefree monomial
ideal. Moreover the graph $G'$ associated to $(I(G)^{s+1} : e_1 \cdots e_s)$ is bipartite on the
same vertex set and same bipartition as $G$.  
 \end{theorem}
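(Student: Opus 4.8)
The plan is to deduce both assertions from Banerjee's structural description of the colon ideal (Theorem \ref{even_connec_equivalent}) together with the parity constraints forced by bipartiteness. Write $M = e_1 \cdots e_s$; as a monomial this is a minimal generator of $I(G)^s$, since all $s$-fold products of edges have the same degree $2s$ and hence none properly divides another. By Theorem \ref{even_connec_equivalent} every minimal generator of $(I(G)^{s+1} : M)$ has degree $2$, so the only way this ideal could fail to be squarefree is through a generator of the form $u^2$; again by the same theorem, such a generator occurs precisely when $u$ is even-connected to itself with respect to $M$. Thus the first assertion reduces entirely to showing that \emph{no vertex of a bipartite graph is even-connected to itself}.

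First I would rule out self-even-connection. Suppose, toward a contradiction, that some $u$ is even-connected to itself via a walk $p_0 p_1 \cdots p_{2k+1}$ as in Definition \ref{even_connected}, with $p_0 = p_{2k+1} = u$. Condition (4) of the definition guarantees that every consecutive pair $p_r p_{r+1}$ is an edge of $G$, so this is a closed walk of odd length $2k+1$. Since $G$ is bipartite it contains no odd closed walk, the desired contradiction. Hence $(I(G)^{s+1}:M)$ has no generator of the form $u^2$ and is a quadratic squarefree monomial ideal. Because it is already squarefree, its polarization is the ideal itself, so the associated graph $G'$ lives on the vertex set $V(G)$.

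Next I would establish bipartiteness with the same bipartition. Fix the bipartition $V(G) = X \sqcup Y$ coming from $G$. By Theorem \ref{even_connec_equivalent} the edges of $G'$ are either edges of $G$ or pairs $\{u,v\}$ even-connected with respect to $M$. Edges of $G$ already cross between $X$ and $Y$. For an even-connection edge $\{u,v\}$, the defining walk $p_0 \cdots p_{2k+1}$ has odd length; since each step of a walk in a bipartite graph crosses from $X$ to $Y$ or back, the two endpoints of a walk of odd length must lie on opposite sides. Therefore $u$ and $v$ lie in different parts of $X \sqcup Y$, every edge of $G'$ respects this bipartition, and $G'$ is bipartite on the same vertex set with the same bipartition.

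The argument is essentially a single parity observation applied twice, so I do not expect a deep obstacle; the one point requiring care is the correct reading of Definition \ref{even_connected}. The object $p_0 \cdots p_{2k+1}$ must be treated as a \emph{walk} — condition (4) only forces consecutive vertices to be adjacent, not all vertices to be distinct — so the contradiction in the first step comes from the nonexistence of odd closed walks (equivalently, odd cycles) in a bipartite graph, rather than from any statement about induced paths.
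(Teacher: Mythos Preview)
Your argument is correct. The paper itself does not supply a proof of this statement; it is quoted verbatim from \cite[Proposition 3.5]{banerjee1} (Alilooee--Banerjee) and used as a black box throughout. What you have written is essentially the argument one would expect in the original source: invoke Theorem \ref{even_connec_equivalent} to reduce both claims to a parity statement about walks in a bipartite graph, then use the absence of odd closed walks to rule out self-even-connection (hence squarefreeness) and the fact that an odd-length walk in a bipartite graph has its endpoints in opposite parts (hence the bipartition is preserved). Your remark that the sequence $p_0p_1\cdots p_{2k+1}$ in Definition \ref{even_connected} must be read as a walk rather than a simple path is exactly right and is the only point where one could stumble; the explicit allowance ``$u$ may be the same as $v$'' in Theorem \ref{even_connec_equivalent} confirms this reading.
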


Polarization is a process to obtain a squarefree monomial ideal from
a given monomial ideal. For details of polarization we refer to \cite{kummini_thesis}, \cite{ms2005}.
\begin{definition}
 Let $f=x_1^{m_1} \cdots x_n^{m_n}$ be a monomial in $R=k[x_1,\ldots,x_n]$.
 Let $\widetilde{R}=k[x_{11},x_{12},\ldots,x_{21},x_{22},\ldots,x_{n1},x_{n_2},\ldots]$. 
 Then a \textit{polarization} of $f$ in $\widetilde{R}$ is the squarefree monomial 
 $\widetilde{f}= x_{11} \cdots x_{1m_1}x_{21}\cdots x_{2m_2} \cdots x_{n1} \cdots x_{nm_n}$. 
 If $f_1,\cdots,f_m \in R$ are monomials and 
 $I = (f_1,\cdots,f_m)$, then we call the squarefree monomial ideal $\widetilde{I}$ generated by the polarization
 of the $f_i$'s in a larger polynomial ring $\widetilde{R}$,
 the polarization of $I$.
\end{definition}
Let $G$ be a graph and $I(G)$ denote the edge ideal of $G$. Then for
any $s \geq 1$ and edges $e_1, \ldots, e_s$ of $G$, $\widetilde{I} =
\widetilde{(I(G)^{s+1} : e_1\cdots e_s)}$ is a squarefree quadratic
monomial ideal, by Theorem \ref{even_connec_equivalent}. Hence there
exists a graph $G'$ associated to $\widetilde{I}$. Note also that $G$
is a subgraph of $G'$.
\begin{example}
Let $G = C_3$ and $I(G)=(x_1x_2,x_2x_3, x_1x_3) \subset
k[x_1,x_2,x_3]$. 
Then $I = (I(G)^2 : x_1x_3) = I(G) + x_2^2$. Therefore,
$\widetilde{I} \subset k[x_1,x_2,x_3,x_4]$ is given by $\widetilde{I} =
I(G) + (x_2x_4)$. Then $G'$ is given the graph $G$ with the edge
$\{x_2,x_4\}$ attached to $G$.
 \end{example}
Let $M$ be a graded $R = k[x_1,\ldots,x_n]$ module. For non-negative
integers $i, j$, let $\beta_{ij}(M)$ denote the $(i,j)$-th graded Betti
number of $M$.
 \begin{theorem}\cite[Proposition 1.3.4]{kummini_thesis} \cite[Exercise 3.15]{ms2005} \label{polarization}
 Let $I \subseteq R=k[x_1,\ldots,x_n]$ be a monomial ideal. 
 If $\widetilde{I} \subseteq \widetilde{R}$ is a polarization of $I$, then for 
 all $\ell,j$, $\beta_{\ell,j}(R/I)=\beta_{\ell,j}(\widetilde{R} / \widetilde {I})$. In particular $\reg(R/I)=\reg (\widetilde R/\widetilde I)$.
 \end{theorem}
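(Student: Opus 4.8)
The plan is to realize the original quotient $R/I$ as a quotient of the polarized quotient $\widetilde{R}/\widetilde{I}$ by a regular sequence of linear ``difference'' forms, and then to invoke the general homological principle that modding out a graded module by a linear nonzerodivisor leaves every graded Betti number unchanged. Since $\reg(M)=\max\{\,j-\ell : \beta_{\ell,j}(M)\neq 0\,\}$ is read off from the Betti table, equality of all $\beta_{\ell,j}$ immediately yields the equality of regularities.

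First I would reduce to a single polarization step. The polarized ideal $\widetilde{I}$ is built by repeatedly splitting off one top power of one variable: if a monomial ideal $J\subseteq S$ has maximal $x_i$-exponent $a\geq 2$, introduce a fresh variable $y$ and form $J'\subseteq S[y]$ by replacing each minimal generator $x_i^{a}m$ (with $m$ coprime to $x_i$) by $x_i^{a-1}y\,m$, leaving the remaining generators untouched. Iterating this finitely often produces $\widetilde{I}$ from $I$, so it suffices to prove $\beta_{\ell,j}(S/J)=\beta_{\ell,j}(S[y]/J')$ for one such step and then compose. The step rests on two claims about the linear form $\delta=x_i-y\in S[y]$. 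Claim (a): the map $S[y]\to S$ sending $y\mapsto x_i$ induces $S[y]/(J'+(\delta))\cong S/J$, which is a direct check since setting $y=x_i$ turns each $x_i^{a-1}y\,m$ back into $x_i^{a}m$. Claim (b): $\delta$ is a nonzerodivisor on $S[y]/J'$. Granting (b), $\delta$ is a length-one regular sequence and, by (a), quotienting by it recovers $S/J$.

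For the homological comparison I would use the following standard fact: if $\delta$ is a linear form that is a nonzerodivisor on a finitely generated graded $T$-module $N$, then $\beta^{T}_{\ell,j}(N)=\beta^{T/(\delta)}_{\ell,j}(N/\delta N)$ for all $\ell,j$. The reason is that, from the short exact sequence $0\to T(-1)\xrightarrow{\delta}T\to T/(\delta)\to 0$ tensored with $N$, one gets $\Tor^{T}_{\ell}(N,T/(\delta))=0$ for $\ell\geq 1$; hence reducing a minimal graded free resolution of $N$ modulo $\delta$ keeps it exact, and it stays minimal because the differential entries lie in the maximal ideal and their images still do. Applying this with $T=S[y]$, $N=S[y]/J'$ and $\delta=x_i-y$ gives $\beta_{\ell,j}(S[y]/J')=\beta_{\ell,j}(S/J)$, and the conclusion for $\reg$ follows as noted above.

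The main obstacle is Claim (b): showing that $x_i-y$ is a nonzerodivisor on $S[y]/J'$. This is precisely the combinatorial content that makes polarization work. Since $J'$ is again a monomial ideal, all its associated primes are monomial primes, and a monomial prime $P$ contains the linear form $x_i-y$ if and only if it contains both $x_i$ and $y$; thus (b) is equivalent to the assertion that no associated prime of $S[y]/J'$ contains $x_i$ and $y$ simultaneously. I would establish this by comparing the primary decomposition of $J'$ with that of $J$ (the fresh variable $y$ is introduced precisely so that $x_i$ and $y$ are never coupled in a minimal prime), or equivalently by a direct monomial-by-monomial argument on a putative zerodivisor $f$ with $(x_i-y)f\in J'$, tracking the highest power of $y$ appearing in $f$ to force $f\in J'$. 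Once (b) is secured, the remaining steps are formal.
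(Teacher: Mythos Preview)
The paper does not supply a proof of this theorem; it is quoted as a known result from \cite{kummini_thesis} and \cite{ms2005}, so there is no ``paper's own proof'' to compare against. Your argument is the standard one found in those references (and in, e.g., Herzog--Hibi's \emph{Monomial Ideals}): factor polarization into single-variable splitting steps, verify that the difference $x_i-y$ is a nonzerodivisor on the partially polarized quotient, and use the fact that quotienting by a linear regular element carries a minimal graded free resolution to a minimal graded free resolution over the smaller ring.

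Two small remarks. First, your description of a single polarization step---replacing only the generators with \emph{maximal} $x_i$-exponent $a$---is slightly nonstandard (most sources peel off one copy of $x_i$ from every generator divisible by $x_i^2$), but it is equally valid: iterating your step also terminates in the squarefree ideal $\widetilde{I}$, and your Claims (a) and (b) hold for it just as well. Second, Claim (b) is the only place where real work happens, and you leave it as a sketch. The cleanest way to finish it is the one you mention first: since $J'$ is monomial, $\operatorname{Ass}(S[y]/J')$ consists of monomial primes, and one checks directly from the generators that any associated prime containing the new variable $y$ cannot also contain $x_i$ (the only generators involving $y$ have $x_i$-degree exactly $a-1$, while those not involving $y$ have $x_i$-degree at most $a-1$, so deleting $x_i$ from a minimal monomial prime containing $y$ still yields a prime over $J'$, contradicting minimality). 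With that filled in, the proof is complete and matches the literature.
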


\section{Upper bound for the regularity of powers of bipartite graph ideals}
In this section, we study the powers of the edge ideals of bipartite graphs. 
We obtain an upper bound for the regularity of powers of edge ideals
of bipartite graphs in terms of co-chordal cover number. 

\begin{definition}
A graph $G$ is chordal (also called triangulated) if every induced
cycle in $G$ has length $3$, and is co-chordal if the complement graph $G^c$ is chordal.
\par The co-chordal cover number, denoted $\cochord(G)$, is the
minimum number $n$ such that there exist co-chordal subgraphs
$H_1,\ldots, H_n$ of $G$ with $E(G) = \bigcup_{i=1}^n E(H_i)$. 
\end{definition}
In the following, we relate the co-chordal cover number of a graph
with that of its polarization.  As a consequence we obtain an upper
bound for $\reg(I(G)^s)$ for $s \geq 1$, when $G$ is a bipartite graph.
\begin{theorem}\label{cochord} 
Let $G$ be a graph and $e$ be an edge of $G$. Let $G'$ be the graph associated to
$\widetilde{(I(G)^{2}:e)}$. Then $$\cochord(G') \leq \cochord(G).$$
\end{theorem}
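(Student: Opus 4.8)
The plan is to read off the new edges of $G'$ directly from Theorem \ref{even_connec_equivalent}, bundle them into a single co-chordal graph, and then splice that graph into a minimum co-chordal cover of $G$ without increasing the number of pieces. Write $e=\{a,b\}$ and $A=N_G(a)$, $B=N_G(b)$. For a \emph{single} edge the even-connection condition of Definition \ref{even_connected} forces $k=1$, so $u$ and $v$ are even-connected with respect to $e$ exactly when there is a path $u\,a\,b\,v$ (or $u\,b\,a\,v$) in $G$; equivalently the new quadratic generators are the pairs $\{u,v\}$ with $u\in A$ and $v\in B$, together with the squares $u^2$ for $u\in A\cap B$. Hence $G'$ is obtained from $G$ by adjoining the ``join'' $K=\{\{u,v\}:u\in A,\ v\in B\}$ and, after polarization, one pendant edge at each vertex of $R:=A\cap B$. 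Since $b\in A$ and $a\in B$, the set $K$ already contains every edge of $G$ incident to $a$ or $b$, and in particular $e\in K$.

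Now fix a co-chordal cover $H_1,\dots,H_n$ of $G$ with $n=\cochord(G)$, and relabel so that $e\in E(H_1)$. I would leave $H_2,\dots,H_n$ untouched (they remain co-chordal subgraphs of $G'$) and replace $H_1$ by $H_1^{+}:=H_1\cup K\cup\{\text{pendant edges}\}$. Because $K$ together with the pendants carries all the new edges and $H_1\subseteq H_1^{+}$, these $n$ subgraphs cover $E(G')$; so the whole theorem reduces to the single claim that $H_1^{+}$ is co-chordal. I will verify this on complements. Partition $V(G)=P\sqcup Q\sqcup R\sqcup W$ with $P=A\setminus B$, $Q=B\setminus A$, $R=A\cap B$ and $W=V(G)\setminus(A\cup B)$ (note $b\in P$, $a\in Q$). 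A direct check of which pairs are $K$-edges shows that in $(H_1\cup K)^{c}=H_1^{c}\cap K^{c}$ the sets $P,Q,R$ are pairwise non-adjacent, $R$ is independent, and inside $P\cup W$ and $Q\cup W$ the graph agrees with $H_1^{c}$.

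The crux is the behaviour of $W$. The key observation is that $W$ is an independent set of $H_1$: since $H_1$ is co-chordal its complement has no induced $C_4$, so $H_1$ is gap-free; but an edge with both endpoints in $W$ would be disjoint from, and have no neighbour of, $e$ (no vertex of $W$ is adjacent to $a$ or $b$), producing an induced $2K_2$ with $e$, a contradiction. Consequently $W$ is a clique in $(H_1\cup K)^{c}$, and this clique separates $P$, $Q$ and $R$ from one another. Thus $(H_1\cup K)^{c}$ is the clique-sum along $W$ of the three induced subgraphs on $P\cup W$, $Q\cup W$ and $R\cup W$; the first two are induced subgraphs of the chordal graph $H_1^{c}$, and the third is just the clique $W$ with the independent set $R$ attached, hence chordal. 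A clique-sum of chordal graphs is chordal, so $(H_1\cup K)^{c}$ is chordal. Finally the polarization pendants are harmless: a degree-one vertex of $H_1^{+}$ is adjacent in the complement to all but one vertex, so it lies on no induced cycle of length $\ge 4$ and may be deleted without affecting chordality. This yields $H_1^{+}$ co-chordal and hence $\cochord(G')\le\cochord(G)$.

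I expect the main obstacle to be exactly this co-chordality verification, and everything hinges on the one combinatorial fact that a gap-free graph containing $e$ cannot carry an edge lying entirely in $W=V(G)\setminus\bigl(N_G(a)\cup N_G(b)\bigr)$; once $W$ is known to be independent in $H_1$, the clique-sum description of the complement and the removal of the pendant vertices are routine. The only mild bookkeeping is recording the exact edge pattern of $K$ across the four blocks $P,Q,R,W$ (and noting $e\in K$, which is what lets the count stay at $n$).
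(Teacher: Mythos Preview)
Your strategy is sound and genuinely different from the paper's. The paper distributes the new even-connected edges among \emph{all} pieces $H_m$ of a minimum cover (according to which $H_m$ contains the edge $\{p_1,p_{1,\mu}\}$) and then verifies that each augmented $H_m'$ is co-chordal via the dual-shelling edge-ordering characterisation of Benzaken--Hammer--de Werra, through a case analysis on the ordering. You instead dump every new edge into the single piece $H_1$ containing $e$ and argue directly on complements: the observation that $W=V(G)\setminus(A\cup B)$ is independent in $H_1$ (because $H_1$ is gap-free and contains $e$) lets you exhibit $(H_1\cup K)^c$ as a clique-sum along the clique $W$ of two induced subgraphs of the chordal graph $H_1^c$ and one split graph. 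This is cleaner and avoids the auxiliary ordering machinery entirely.

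There is, however, a real gap in the last step. The sentence ``a degree-one vertex of $H_1^{+}$ is adjacent in the complement to all but one vertex, so it lies on no induced cycle of length~$\ge 4$'' is false as a general principle: take the co-chordal graph on $\{u,v,w\}$ with the single edge $\{u,v\}$ and attach a pendant $z$ at $w$; the complement of the resulting graph is an induced $C_4$. What rescues you is specific to your situation. If $z_r$ is the pendant at $r\in R$, any induced $C_4$ through $z_r$ in $(H_1^{+})^c$ must be $z_r\,u\,r\,v$ with $u,v$ non-adjacent to $r$ in $H_1^{+}$ but adjacent to each other there; since $r\in R$ is joined by $K$ to all of $P\cup Q\cup R$, this forces $u,v$ into $W$ (or among the other pendants), an independent set of $H_1^{+}$, a contradiction. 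More elegantly, just run your clique-sum argument with $\widetilde W=W\cup\{z_r:r\in R\}$ in place of $W$: this set is still independent in $H_1^{+}$, the pendants are isolated in $H_1^{+}[P\cup\widetilde W]$ and $H_1^{+}[Q\cup\widetilde W]$ (so their complements are $H_1^{c}[P\cup W]$ and $H_1^{c}[Q\cup W]$ with universal vertices adjoined, hence still chordal), and $R\cup\widetilde W$ remains split. With this correction your proof is complete.
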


\begin{proof}
Let $\cochord(G) = n$. Then there exist co-chordal subgraphs $H_1,
\ldots, H_n$ such that $E(G) = \bigcup_{m=1}^n E(H_m)$.
If $G = G'$, then we are done. 
Let  $\{p_1,p_2\}=e$, $N_G(p_1) \setminus \{p_2\}=\{p_{1,1},\ldots,p_{1,s}\}$
and $N_G(p_2) \setminus \{p_1\}=\{p_{2,1},\ldots,p_{2,t}\}$.
For any two vertices $x,y$, set
$$\{[x,y]\} = \left\{
  \begin{array}{ll}
	\{x,y\} & \mbox{ if } x \neq y;\\
	\{x,z_x\} & \mbox{ if } x = y, \mbox{ where } z_x \mbox{ is a new
	vertex.} 
  \end{array}\right.$$
Note that for $x,y \in V(G)$, $x$ is even-connected to $y$ with
respect to $e$ in $G$ if
and only if $\{x,y\} = \{p_{1,i}, p_{2,j}\}$ for some $1 \leq i \leq s, 1 \leq j \leq t$. 
Therefore $$E(G')=E(G) \cup \{[p_{1,1},p_{2,1}]\}\cup \cdots \cup
\{[p_{1,1},p_{2,t}]\} \cup \cdots \cup \{[p_{1,s},p_{2,1}]\} \cdots
\{[p_{1,s},p_{2,t}]\}.$$ For each $1 \leq \mu \leq s$, $\{p_1,
p_{1,\mu}\} \in E(H_m)$ for some $1 \leq m \leq n$. We add certain
even-connected edges to $H_m$ with a rule as described below, to get 
a new graph $H_m'$:

Since $H_m$ is co-chordal, by \cite[Lemma 1 and Theorem 2]{benzaken},
there is an ordering of edges of $H_m$, $f_1 < \cdots < f_{t_m}$, such
that for $1\leq r \leq t_m, ~ (V(H_m), \{f_1,\ldots,f_r\})$ has no
induced subgraph isomorphic to $2K_2$. 
\vskip 2mm
  If for $1 \leq \mu \leq s, ~\{p_{1,\mu},p_1\} =
	f_k$ for some $1 \leq k \leq t_m$, then set $$\cdots < f_k < \{[p_{1,\mu},p_{2,1}]\}< \cdots
	<\{[p_{1,\mu},p_{2,t}]\}< f_{k+1} < \cdots .$$
Then we have $E(G') = \bigcup_{m=1}^n E(H_m')$.
We claim that $H_m'$ is co-chordal. 
Let $E(H_m')=\{g_1,\ldots,g_{t_{m_1}}\}$ be edge set of $H_m'$ and
linearly ordered as given above. By Lemma 1 and Theorem 2 of
\cite{benzaken}, it is enough to prove that for $1\leq r' \leq t_{m_1},
~ (V(H_m'), \{g_1,\ldots,g_{r'}\})$ has no induced subgraph isomorphic
to $2K_2$. Suppose
$H_m'$ is not co-chordal. Then there exists a least $i$ such that
$(V(H_m'), \{g_1,\ldots,g_i\})$ has an induced $2K_2$-subgraph, say
$\{g_j, g_i\}$. Since $H_m$ is co-chordal, $g_j$ and $g_i$ cannot be
in $E(H_m)$ simultaneously.  
\vskip 2mm \noindent
\textsc{Case 1:} Suppose $g_j \in E(H_m')\setminus E(H_m)$ and
$g_i=\{x_\alpha,x_\beta\} \in E(H_m)$.  Let
$g_j=\{[p_{1,k},p_{2,\ell}]\}$, for some $1 \leq k \leq s$ and $1 \leq \ell
\leq t$. By construction, we have
$$g_{j'}=\{p_{1,k},p_1\} < g_j <g_i.$$ Since $g_{j'}, g_i \in E(H_m)$,
they cannot form an induced $2K_2$-subgraph of $H_m$. Therefore,
either $g_{j'}$ and $g_i$ have a vertex in common or there exist an
edge $g_h \in E(H_m)$ such that $g_h < g_i$ connecting $g_{j'}$ and
$g_i$. If $g_{j'}$ and $g_i$ have a vertex in common, then 
this contradicts the assumption that $\{g_j, g_i\}$ form an
induced $2K_2$-subgraph. Suppose $g_h$ is a an edge
connecting $g_{j'}$ and $g_i$.
Let $g_h=\{p_1,x_\alpha\}$ with
$x_\alpha \neq p_2$. Then $x_\alpha \in N_{H_m}(p_1)$ and hence by
construction, there is a new edge $\{[x_\alpha,p_{2,\ell}]\} \in E(H_m')$
with the ordering $$g_h < \{[x_\alpha,p_{2,\ell}]\} < g_i.$$  This also
contradicts the assumption that $\{g_j, g_i\}$ is an induced
$2K_2$-subgraph. Now if $g_h=\{p_1,x_\alpha\}$ and $x_\alpha = p_2$, then $x_\beta \in N_{H_m}(p_2)$.
Therefore there is an edge $\{[p_{1,k},x_\beta]\} \in
E(H_m')$ with the ordering $$g_{j'} < \{[p_{1,k}, x_\beta]\} < g_i.$$  This also
contradicts the assumption that $\{g_j, g_i\}$ is an induced
$2K_2$-subgraph. Similarly, if $g_h = \{p_1, x_\beta\}, \{p_{1,k},
x_\alpha\}$ or $\{p_{1,k}, x_\beta\}$ for some $k$, then one arrives
at a contradiction.

If $g_j \in E(H_m)$ and $g_i \in E(H_m') \setminus E(H_m)$, then we
get contradiction in a similar manner.
\vskip 2mm \noindent
\textsc{Case 2:} Suppose $g_j, g_i \in E(H_m')\setminus E(H_m)$. Let $g_i=\{[p_{1,k'},p_{2,\ell'}]\}$
and $g_j=\{[p_{1,k},p_{2,\ell}]\}$, for some $1 \leq k,k' \leq s$, $1 \leq \ell,\ell' \leq t$. 
By construction, we have $$g_{j'}=\{p_{1,k},p_1\}<g_{j}<g_{i'}=\{p_{1,k'},p_{1}\}<g_{i}.$$
Since $p_{1,k} \in N_G(p_1)$ and $p_{2,\ell'} \in N_G(p_2)$,
by construction, there exists the edge $\{[p_{1,k}, p_{2,\ell'}]\}$ in
$H_m'$ with the ordering
$$g_{j'}<\{[p_{1,k},p_{2,\ell'}]\}<g_{i'}.$$
This contradicts the assumption that $\{g_j, g_i\}$ is an induced
$2K_2$-subgraph.

Therefore $H_m'$ is a co-chordal graph for $1 \leq m \leq n$ and
$E(G') = E(H_1') \cup \cdots \cup E(H_n').$ 
Hence $\cochord(G') \leq n$.
\end{proof}

In \cite[Corollary 3.6]{banerjee1}, Alilooee and Banerjee proved that, if the
minimal free resolution of $I(G)$ is linear, then so is the minimal
free resolution of $(I(G)^{s+1}:e_1 \cdots e_s)$. By applying this
result recursively, one can see that if the minimal free resolution of
$I(G)$ is linear, then so is the minimal free resolution of
$(I^2:e_{i_1})^2: \cdots )^2: e_{i_j})$. 
We reprove this result as a consequence of Theorem
\ref{cochord}.

\begin{corollary}\label{polarization_even}  
Let $G$ be any graph with edge ideal $I=I(G)$ and $e_1,\ldots, e_s$,
$s \geq 1$ be some edges of $G$ which are not necessarily distinct. If
the minimal free resolution of $I$ is linear, then so is the minimal
free resolution of 
$(((I^2:e_{i_1})^2: e_{i_2})^2 \cdots)^2 : e_{i_m}),$
where $\{i_1,\ldots,i_m\}\subseteq \{1,\ldots,s\}$.
\end{corollary}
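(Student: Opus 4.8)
The plan is to reduce the statement about linear resolutions to the numerical statement $\cochord = 1$ and then to run an induction on $m$ in which Theorem \ref{cochord} does all the work. The starting observation is that, for a graph $H$ having at least one edge, $I(H)$ has a linear minimal free resolution if and only if $\cochord(H) = 1$. One direction is immediate from (\ref{ag_reg_chg}): if $\cochord(H) = 1$ then $\reg(I(H)) \le 2$, and since $I(H)$ is generated in degree $2$ this forces $\reg(I(H)) = 2$, i.e. a linear resolution. The reverse direction is Fr\"oberg's theorem \cite{froberg}: a linear resolution of $I(H)$ forces $H^c$ to be chordal, that is, $H$ to be co-chordal, which means precisely $\cochord(H) = 1$. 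I would use Fr\"oberg's direction once, to convert the hypothesis ``$I(G)$ has a linear resolution'' into $\cochord(G) = 1$.

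Next I would set up the iteration at the level of the associated graphs. Put $G_0 = G$, and for $1 \le j \le m$ let $G_j$ be the graph associated to $\widetilde{(I(G_{j-1})^2 : e_{i_j})}$; by Theorem \ref{even_connec_equivalent} each such colon ideal is quadratic, so its polarization is a squarefree quadratic ideal, i.e. an edge ideal, and $G_j$ is well defined. Since $G = G_0$ is a subgraph of $G_1$, which is a subgraph of $G_2$, and so on, the edge $e_{i_j} \in E(G)$ is still an edge of $G_{j-1}$, so Theorem \ref{cochord} applies at every stage and gives $\cochord(G_j) \le \cochord(G_{j-1})$. Feeding in $\cochord(G_0) = \cochord(G) = 1$ yields $\cochord(G_m) \le 1$, and since $G_m$ contains the edges of $G$ we in fact have $\cochord(G_m) = 1$. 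By the easy direction above, $I(G_m)$ then has a linear minimal free resolution.

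Finally I would transport this conclusion back to the ideal in the statement. At each stage Theorem \ref{polarization} says that $I(G_j) = \widetilde{(I(G_{j-1})^2 : e_{i_j})}$ and the un-polarized ideal $(I(G_{j-1})^2 : e_{i_j})$ have identical graded Betti numbers, hence the same regularity and the same resolution type. Iterating this identification, the linearity of $I(G_m)$ is equivalent to the linearity of the iterated colon $(((I^2 : e_{i_1})^2 : e_{i_2})^2 \cdots )^2 : e_{i_m})$, which is the assertion.

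The hard part will be making the bridge in the last paragraph fully rigorous, because polarization does not commute with taking powers, and hence not with the operation $J \mapsto (J^2 : e)$: the literal iterated colon and the graph iteration $G_0, G_1, \ldots, G_m$ are a priori different ideals. The point to nail down is that one is allowed to polarize between successive colons without affecting the resolution, so that after each colon one may legitimately replace the (non-squarefree, quadratic) ideal by the edge ideal of its associated graph before performing the next colon and invoking Theorem \ref{cochord}. This reconciliation, together with the routine check that every intermediate colon ideal is again generated in degree two so that its polarization really is an edge ideal, is where the argument needs the most care; everything else is a direct application of Theorem \ref{cochord} and Theorem \ref{polarization}.
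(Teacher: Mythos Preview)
Your approach is essentially identical to the paper's: translate ``linear resolution'' into $\cochord=1$ via Fr\"oberg, set $G_0=G$ and recursively define $G_j$ as the graph of the polarization of $(I(G_{j-1})^2:e_{i_j})$, then apply Theorem~\ref{cochord} at each step to keep $\cochord(G_j)=1$. The paper's proof does exactly this, ending with the sentence ``recursively applying Theorem~\ref{cochord} and \cite[Theorem 1]{froberg}, we get the assertion.''

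The concern you isolate in your last paragraph is genuine and applies to the paper's proof as well. The recursive construction proves that $I(G_m)$ has a linear resolution, but the statement is about the un-polarized iterated colon $J_m=(((I^2:e_{i_1})^2:\cdots)^2:e_{i_m})$, and these two objects are built by different procedures once $J_1$ fails to be squarefree. Concretely, if $e_{i_1}$ lies in a triangle of $G$ then $J_1=(I(G)^2:e_{i_1})$ contains a square, so $J_1\neq I(G_1)$ and the ideals $(J_1^2:e_{i_2})$ and $(I(G_1)^2:e_{i_2})$ live in different rings and are not related by polarization. Neither your argument nor the paper's supplies the missing step from $\reg(I(G_m))=2$ to $\reg(J_m)=2$ in this non-bipartite situation; the paper simply asserts the conclusion. (For bipartite $G$ the issue evaporates, since by Theorem~\ref{even_con_bipartite} every intermediate colon is already squarefree and $J_j=I(G_j)$ on the nose.) So your proposal matches the paper's proof and is more candid than the paper about where the argument is incomplete.
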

\begin{proof}
Fr\"oberg proved that $I(G)$ has a linear minimal free resolution if
and only if $\cochord(G) = 1$, \cite[Theorem 1]{froberg}. Let $G_1'$ be the graph associated with the polarization of $(I(G)^2 :
e_{i_1})$. Therefore from Theorem \ref{cochord}, $\cochord(G'_1)=1$.
For $j \geq 2,$ define $G_j'$ to be the graph associated with
the polarization of $(I(G_{j-1}')^2 : e_{i_j})$. Now recursively
applying Theorem \ref{cochord} and \cite[Theorem 1]{froberg}, we get
the assertion.
 \end{proof}

The following corollary helps to obtain upper bound for the
asymptotic regularity of edge ideals of bipartite graphs.
\begin{corollary}\label{bipartite_chord_com} Let $G$ be a bipartite
graph and $e_1,\ldots, e_s$, $s \geq 1,$ be some edges of $G$ which are not
necessarily distinct. Let $G'$ be the graph associated to
$(I(G)^{s+1}:e_1 \cdots e_s)$. Then $$\cochord(G') \leq \cochord(G).$$
 \end{corollary}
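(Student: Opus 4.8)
The plan is to bootstrap the single-edge result, Theorem \ref{cochord}, to the $s$-fold colon by peeling off one edge at a time. The first observation that makes this clean for bipartite graphs is Theorem \ref{even_con_bipartite}: since $G$ is bipartite, $(I(G)^{s+1}:e_1\cdots e_s)$ is already squarefree and quadratic, so its polarization is trivial and $G'$ lives on the same vertex set $V(G)$ with the same bipartition. Consequently no new polarization variables are ever introduced, and at every intermediate stage the graph under consideration is again bipartite on $V(G)$; this is precisely what lets us invoke Theorem \ref{cochord} repeatedly without the vertex set growing.

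Concretely, I would set $G_0 = G$ and define recursively $G_j$ to be the graph associated to $(I(G_{j-1})^2 : e_j)$ for $1 \le j \le s$. Each $G_j$ is bipartite on $V(G)$ by the previous paragraph, and $G_{j-1}$ is a subgraph of $G_j$, so $e_{j+1},\dots,e_s$ remain edges of $G_j$. The key claim is that this iterated construction terminates at the desired graph, i.e.\ $G_s = G'$, where $G'$ is the graph of $(I(G)^{s+1}:e_1\cdots e_s)$. Granting this, Theorem \ref{cochord} applies to the pair $(G_{j-1}, e_j)$ for each $j$ and yields $\cochord(G_j) \le \cochord(G_{j-1})$; chaining these inequalities gives
\[
\cochord(G') = \cochord(G_s) \le \cochord(G_{s-1}) \le \cdots \le \cochord(G_0) = \cochord(G),
\]
which is exactly the assertion. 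Equivalently one can phrase the induction algebraically: writing $H = G_1$ for the graph of $(I(G)^2:e_1)$, the claim becomes the identity $(I(G)^{s+1}:e_1\cdots e_s) = (I(H)^s : e_2\cdots e_s)$, after which one inducts on $s$ using $\cochord(H) \le \cochord(G)$ for the base case and the inductive hypothesis applied to the bipartite graph $H$ with the $s-1$ edges $e_2,\dots,e_s$.

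The main obstacle is establishing this recursive identity $G_s = G'$ (equivalently, the colon decomposition above). This is a bookkeeping argument about even-connections: by Theorem \ref{even_connec_equivalent} the edges of $G'$ are exactly those of $G$ together with the pairs even-connected with respect to the product $e_1\cdots e_s$, and likewise each step $G_{j-1}\to G_j$ records the pairs even-connected with respect to the single edge $e_j$. One must show that an even-connecting path for $e_1\cdots e_s$ in $G$ decomposes so that its contribution is produced, edge by edge, by the single-edge even-connections in $G_0,\dots,G_{s-1}$, and conversely that every edge created along the recursion is even-connected in $G$ with respect to $e_1\cdots e_s$. Here condition (3) of Definition \ref{even_connected}, which controls the multiplicity with which each $e_i$ may be used, matches the fact that in the recursion each listed edge $e_j$ is consumed exactly once. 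This compositional behaviour of even-connection is the technical heart of the argument, and it is the step for which I would appeal to the even-connection framework of \cite{banerjee} and \cite{banerjee1}.
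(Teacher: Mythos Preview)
Your proposal is correct and follows essentially the same route as the paper: peel off one edge at a time using Theorem \ref{cochord}, with Theorem \ref{even_con_bipartite} ensuring bipartiteness (and hence squarefreeness) is preserved at each stage. The recursive identity you identify as the ``main obstacle,'' namely $(((I(G)^{2}:e_1)^2:\cdots)^2:e_s)=(I(G)^{s+1}:e_1\cdots e_s)$ for bipartite $G$, is exactly \cite[Lemma 3.7]{banerjee1}, which the paper simply cites; so the bookkeeping you outline is already done in the literature and need not be reproved.
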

\begin{proof} Since $G$ is a bipartite graph, it follows by Theorem
\ref{even_con_bipartite} that the graph $G'$ associated to
$\widetilde{((\widetilde{(I(G)^{2}:e_{1})}^2: \cdots )^2:e_{s})}$ is
bipartite on the same vertex set and same bipartition as on $G$.  By
\cite[Lemma 3.7]{banerjee1}, $(((I(G)^{2}:e_1)^2: \cdots
)^2:e_s)=(I(G)^{s+1}:e_1 \cdots e_s)$.  Therefore by applying Theorem
\ref{cochord} recursively, we get $\cochord(G') \leq \cochord(G)$.
\end{proof}
If $G$ is not a bipartite graph, then the equality
$(((I(G)^{2}:e_1)^2: \cdots)^2:e_s)=(I(G)^{s+1}:e_1 \cdots e_s)$ need
not necessarily hold, see the example below.
This example further shows that $(((I(G)^2:e_1))^2:e_2)$ has a linear
minimal free resolution need not necessarily imply that
$(I(G)^3:e_1e_2)$ has a linear minimal free resolution.

\begin{example} 
Let $I=(x_1x_7,x_1x_2,x_2x_3,x_2x_6,x_3x_4,x_3x_5, x_4x_5,x_6x_8)
\subset R=k[x_1,\ldots, x_8]$ and $G$ be the associated graph.  Let
$G_1$ and $G_2$ be the graphs associated to
$$\widetilde{(I^3:x_2x_3x_4x_5)}=I+(x_1x_3,x_1x_5,x_1x_4,x_3y_1,x_3x_6,x_4x_6,x_5x_6)\subset R_1=R[y_1]$$ and
$$\widetilde{((\widetilde{(I^2:x_2x_3)})^2:x_4x_5)}=\widetilde{(I^3:x_2x_3x_4x_5)}+(x_6y_3,x_1x_6,x_1y_2)\subset 
R_1[y_2,y_3]$$ respectively.
Then it can easily be seen that $G_1^c$ is not chordal and $G_2^c$ is chordal. 
Therefore by \cite[Theorem 1]{froberg}, $I(G_1)$ does not have linear minimal free resolution
and $I(G_2)$ has linear minimal free resolution. By Theorem \ref{polarization},
$(((I^2:x_2x_3))^2:x_4x_5)$ has linear minimal free resolution but $(I^3:x_2x_3x_4x_5)$ does not have
linear minimal free resolution.
 \end{example}

We now prove an upper bound for $\reg(I(G)^s)$, when $G$ is a bipartite graph.
\begin{theorem}\label{upper_bound_reg} 
Let $G$ be a bipartite graph. Then for all $s \geq 1$, 
 $$\reg(I(G)^s) \leq 2s + \cochord (G) -1.$$
\end{theorem}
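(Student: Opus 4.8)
The plan is to induct on $s$, using Banerjee's recursive bound (Theorem \ref{regularity_compa}) to pass from the $s$-th power to the $(s+1)$-st and controlling each colon ideal that appears by combining polarization with the co-chordal comparison of Corollary \ref{bipartite_chord_com}. For the base case $s = 1$, the claimed inequality reads $\reg(I(G)) \leq \cochord(G) + 1$, which is exactly the upper bound in (\ref{ag_reg_chg}) due to Woodroofe, valid for every graph. So the base case is immediate.

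For the inductive step, suppose $\reg(I(G)^s) \leq 2s + \cochord(G) - 1$. Let $\{m_1, \ldots, m_k\}$ be the minimal monomial generators of $I(G)^s$; since $I(G)$ is generated in degree $2$ by the edges, each $m_\ell$ is an $s$-fold product $e_1 \cdots e_s$ of edges of $G$. By Theorem \ref{regularity_compa},
$$\reg(I(G)^{s+1}) \leq \max\{\reg(I(G)^{s+1} : m_\ell) + 2s,~ 1 \leq \ell \leq k,~ \reg(I(G)^s)\}.$$
The term $\reg(I(G)^s)$ is at most $2s + \cochord(G) - 1 \leq 2(s+1) + \cochord(G) - 1$ by the induction hypothesis, so it suffices to bound each colon term. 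Fix $\ell$ and let $G'$ be the graph associated to the polarization of $(I(G)^{s+1} : m_\ell)$. Since polarization preserves regularity (Theorem \ref{polarization}), we have $\reg(I(G)^{s+1} : m_\ell) = \reg(I(G'))$; applying Woodroofe's bound (\ref{ag_reg_chg}) to $G'$ gives $\reg(I(G')) \leq \cochord(G') + 1$; and Corollary \ref{bipartite_chord_com} gives $\cochord(G') \leq \cochord(G)$. Hence $\reg(I(G)^{s+1} : m_\ell) + 2s \leq \cochord(G) + 1 + 2s = 2(s+1) + \cochord(G) - 1$. Taking the maximum completes the induction.

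The substance of the argument is concentrated entirely in the two imported facts about $G'$: that polarization converts the colon ideal into an honest edge ideal of the same regularity, and --- the genuinely nontrivial input --- that the co-chordal cover number does not increase under even-connection, which is exactly where bipartiteness of $G$ enters (via Corollary \ref{bipartite_chord_com}, resting on Theorem \ref{cochord}). Once these are in hand the regularity bookkeeping is routine. The only point requiring care is to confirm that every minimal generator of $I(G)^s$ really is an $s$-fold product of edges, so that Corollary \ref{bipartite_chord_com} applies verbatim to each colon ideal appearing in Banerjee's maximum.
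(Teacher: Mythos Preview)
Your proof is correct and follows essentially the same route as the paper: induction on $s$, the base case being Woodroofe's bound, and the inductive step combining Theorem \ref{regularity_compa} with Corollary \ref{bipartite_chord_com} applied to the graph $G'$ of each colon ideal. The only cosmetic difference is that you invoke polarization (Theorem \ref{polarization}) to pass to $G'$, whereas the paper uses Theorem \ref{even_con_bipartite} directly, since for bipartite $G$ the colon ideal $(I(G)^{s+1}:e_1\cdots e_s)$ is already squarefree and no polarization is needed.
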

\begin{proof} We prove by induction on $s$. If $s=1,$ then the
assertion follows from \cite[Theorem 1]{russ}. 
Assume $s >1$. By applying Theorem \ref{regularity_compa} and using
induction, it is enough to prove that for edges $e_1, \ldots, e_s$ of
$G$ (not necessarily distinct),
$\reg(I(G)^{s+1}:e_1 \cdots e_s) \leq \cochord(G)+1$ for all $s > 1$. 
Let $G'$ be the graph associated to the ideal $(I(G)^{s+1}:e_1 \cdots e_s)$.  
\begin{eqnarray*}
 \reg((I(G)^{s+1}:e_1 \cdots e_s))& \leq & \cochord(G')+1, \\
 &\leq & \cochord(G)+1, 
\end{eqnarray*}
where the first inequality follows from \cite[Theorem 1]{russ} and
the second inequality follows from Corollary \ref{bipartite_chord_com}.
Hence $\reg(I(G)^s) \leq 2s+\cochord(G)-1$ for all $s\geq 1$.
\end{proof}
 \begin{remark}
The inequality given in Theorem \ref{upper_bound_reg} could be asymptotically strict.
For example, if $G=C_8$, then one can see that the co-chordal
subgraphs of $C_8$ are paths with at most $3$ edges so that
$\cochord(G)=3$. On the other hand, by  \cite[Theorem 5.2]{selvi_ha},
$\reg(I(G)^s)=2s+1 < 2s+2$ for all $s \geq 2$.  
 \end{remark}

From \cite[Theorem 4.5]{selvi_ha} and Theorem \ref{upper_bound_reg} for any bipartite graph $G$, we have
\begin{equation}\label{eq:3}
2s+\ab(G)-1 \leq \reg(I(G)^s) \leq 2s+ \cochord (G)-1 \text{ for any } s \geq 1.
\end{equation}

As a consequence of $(\ref{eq:3})$, we derive the following result of
Alilooee and Banerjee: 
\begin{corollary}\cite[Proposition 2.15]{banerjee1} Let $G$ be a bipartite graph. Then following are equivalent
\begin{enumerate}
 \item $I(G)$ has a linear presentation.
 \item $I(G)^s$ has a linear resolution for all $s \geq 1$.
 \item $G^c$ is chordal.
\end{enumerate}
\end{corollary}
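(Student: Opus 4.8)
The plan is to prove the cycle of implications $(3)\Rightarrow(2)\Rightarrow(1)\Rightarrow(3)$. Throughout I use the elementary observation that, since $I(G)^s$ is generated in degree $2s$, one has $\reg(I(G)^s)=2s$ precisely when $I(G)^s$ has a linear resolution; thus each of the three conditions amounts to a regularity attaining its minimal possible value. I also use Fr\"oberg's theorem \cite[Theorem 1]{froberg}, which asserts that $G^c$ is chordal if and only if $\cochord(G)=1$.

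For $(3)\Rightarrow(2)$ I substitute $\cochord(G)=1$ into the upper bound in (\ref{eq:3}), obtaining $\reg(I(G)^s)\leq 2s$ for all $s\geq 1$. Since $I(G)^s$ is generated in degree $2s$ one always has $\reg(I(G)^s)\geq 2s$, so $\reg(I(G)^s)=2s$ and $I(G)^s$ has a linear resolution for every $s$. The implication $(2)\Rightarrow(1)$ is immediate: taking $s=1$, a linear resolution of $I(G)$ is in particular a linear presentation.

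The implication $(1)\Rightarrow(3)$ is where the bipartite hypothesis is needed, and I would split it into two steps. The first is a purely combinatorial reduction: for a bipartite graph $G$, $G^c$ is chordal if and only if $G$ is gap-free. Indeed, an induced $C_4$ in $G^c$ is the same as an induced $2K_2$ in $G$, while an induced cycle of length $k\geq 5$ in $G^c$ would produce an induced $\overline{C_k}$ in $G$; but $\overline{C_k}$ contains an odd cycle for every $k\geq 5$ (one has $\overline{C_5}=C_5$, and for $k\geq 6$ the vertices $1,3,5$ form a triangle in $\overline{C_k}$), so it cannot occur as an induced subgraph of a bipartite graph. Hence for bipartite $G$ the only obstruction to $G^c$ being chordal is an induced $2K_2$.

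The second, and main, step is to show that a linear presentation of $I(G)$ forces $G$ to be gap-free. I would argue by contradiction: if $\{a,b,c,d\}$ induces a $2K_2$ with edges $ab$ and $cd$ only, then the independence complex of the induced subgraph on $\{a,b,c,d\}$ is the $4$-cycle $a\,c\,b\,d$ and so has $\tilde H_1\cong k$; by Hochster's formula this yields a nonzero summand of $\beta_{1,4}(I(G))$, and since all such summands are nonnegative we get $\beta_{1,4}(I(G))\neq 0$, contradicting linear presentation. (Equivalently, one may invoke the standard fact that the Koszul relation on the disjoint generators $ab,cd$ of an induced $2K_2$ survives as a minimal first syzygy of degree $4$.) Combining the two steps, linear presentation gives gap-freeness, which for bipartite $G$ gives chordality of $G^c$, closing the cycle. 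The delicate point---and the one I would treat most carefully---is exactly this last non-cancellation claim: one must ensure that the degree-$4$ relation coming from an induced $2K_2$ is genuinely a minimal generator of the syzygy module, rather than a consequence of the linear (degree-$3$) syzygies.
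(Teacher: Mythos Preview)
Your proof is correct and follows essentially the same route as the paper: both use the inequality (\ref{eq:3}) to obtain $(3)\Rightarrow(2)$, and both reduce $(1)\Rightarrow(3)$ to the equivalence, for bipartite graphs, of gap-freeness with chordality of $G^c$. The only difference is that the paper outsources two of your steps to citations---it invokes \cite[Proposition 1.3]{nevo_peeva} for ``linear presentation $\Leftrightarrow$ gap-free'' and states as known that $\ab(G)=1\Leftrightarrow\cochord(G)=1$ for bipartite $G$---whereas you supply direct arguments (Hochster's formula for the former, the odd-cycle observation in $\overline{C_k}$ for the latter). One small quibble: what you attribute to Fr\"oberg, ``$G^c$ chordal $\Leftrightarrow\cochord(G)=1$,'' is simply the definition of $\cochord$; Fr\"oberg's theorem is the equivalence with linear resolution, which you do not actually need.
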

\begin{proof} 
It is known that for a graph $G$, if $G$ is bipartite, then $\ab(G) = 1$ if and only if $\cochord(G) = 1$.
Therefore the equivalence of the three statements follow directly 
from $(\ref{eq:3})$ and \cite[Proposition 1.3]{nevo_peeva}. 
\end{proof}

For any graph $G$,
H{\`a} and Van Tuyl proved that
$\reg(I(G)) \leq \ca(G)+1$, \cite[Theorem 6.7]{ha_adam}. Woodroofe
then proved a strong 
generalization of their result, namely, $\reg(I(G))\leq \ba(G)+1$ for
any graph $G$, \cite[Theorem 2]{russ}.
Let $G$ be a graph and
$\{z_1,\ldots,z_t\}$ be a minimum maximal matching of $G$. Let $Z_i$
be the subgraph of $G$ with $E(Z_i)=z_i \cup \{ \text{ adjacent edges
of $z_i$}\}$. Then for each $i$,  $Z_i$ is a co-chordal subgraph of $G$ 
and $E(G) = \cup_{i=1}^t E(Z_i).$ 
Hence $\cochord(G)\leq \ba(G)$.
Therefore, for any bipartite graph $G$, it follows from \cite[Theorem 4.6]{selvi_ha} and Theorem
\ref{upper_bound_reg} that
\begin{equation}\label{eq:4}
 2s+\ab(G)-1 \leq \reg(I(G)^s)\leq 2s+\ba(G)-1.
\end{equation}
A \textit{dominating induced matching} of $G$ is an induced matching which also
forms a maximal matching of $G$. 
If $G$ has a dominating induced
matching, then $\ab(G)=\ba(G)$. Hence for any bipartite graph $G$
with dominating induced matching, we have, $$\reg(I(G)^s)=2s+\ab(G)-1 \text{ for $s \geq 1$.}$$
In \cite{hibi}, Hibi et al. 
characterized graphs with dominating induced matchings 
and also $G$ satisfying $\ab(G)=\ba(G)$.

\section{Relation between $G$ and $G'$}
Let $G$ be a graph and $e_1, \ldots, e_s$ be edges of $G$. Let $G'$ be
the graph associated with $\widetilde{(I(G)^{s+1} : e_1\cdots e_s)}$. In
this section, we compare certain algebraic and combinatorial
properties of $G$ and $G'$. 
Using these comparisons we obtain upper bounds for the
regularity of powers of edge ideals of bipartite graphs.


We begin by considering unmixed bipartite graphs. 

\begin{theorem}\label{unmi_colon} 
If $G$ is an unmixed bipartite graph, then so is the graph $G'$
associated to $(I(G)^{s+1}:e_1 \cdots e_s)$, for any $s$-fold product
$e_1 \cdots e_s$  and $s \geq 1$.\footnote{In a personal
communication, we have been informed that Banerjee and Mukundan have
also obtained Theorem \ref{unmi_colon}.}
\end{theorem}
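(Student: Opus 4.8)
The plan is to reduce to the single-colon case $s=1$ and then exploit the known structural description of unmixed bipartite graphs. For the reduction, recall from the proof of Corollary~\ref{bipartite_chord_com} that for a bipartite graph one has the chain identity $(I(G)^{s+1}:e_1\cdots e_s) = (((I(G)^2:e_1)^2:\cdots)^2:e_s)$, and that by Theorem~\ref{even_con_bipartite} each intermediate colon ideal is again a squarefree quadratic monomial ideal defining a bipartite graph on the same vertex set (so no polarization is actually needed). Thus, letting $G''$ be the graph of $(I(G)^s:e_1\cdots e_{s-1})$, the same identity shows that $G'$ is the graph of $(I(G'')^2:e_s)$; by the inductive hypothesis $G''$ is unmixed bipartite, and since $G\subseteq G''$ the edge $e_s$ lies in $G''$. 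So an induction on $s$ reduces the statement to the assertion: \emph{if $H$ is unmixed bipartite and $e\in E(H)$, then the graph $H'$ of $(I(H)^2:e)$ is unmixed bipartite.}

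For this base case I would invoke Villarreal's structural characterization of unmixed bipartite graphs: after discarding isolated vertices (which never acquire new edges under even-connection and do not affect unmixedness), an unmixed bipartite graph admits a labelling $X=\{x_1,\dots,x_g\}$, $Y=\{y_1,\dots,y_g\}$ with $x_iy_i\in E(H)$ for all $i$ and with $x_iy_j,\,x_jy_k\in E(H)\Rightarrow x_iy_k\in E(H)$; equivalently, the relation $i\to j :\Leftrightarrow x_iy_j\in E(H)$ is reflexive and transitive. Writing $e=\{x_a,y_b\}$ (so $a\to b$), the even-connected pairs with respect to $e$ are exactly the $\{x_i,y_k\}$ with $x_i\in N_H(y_b)$ and $y_k\in N_H(x_a)$, i.e. with $i\to b$ and $a\to k$; since $H$ is bipartite these join the two sides and are never equal, so no diagonal (loop) terms occur. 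Hence $x_iy_k\in E(H')$ precisely when $i\to' k$, where $i\to' k :\Leftrightarrow (i\to k)\ \text{or}\ (i\to b\ \text{and}\ a\to k)$.

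It then remains to check that $\to'$ is again reflexive and transitive, which preserves the perfect matching $x_iy_i$ for $H'$ and lets me re-apply the characterization to conclude $H'$ is unmixed (that $H'$ is bipartite is Theorem~\ref{even_con_bipartite}). Reflexivity is immediate from $i\to i$. For transitivity I would expand the hypotheses $i\to' k$ and $k\to' l$ into the four combinations of the two disjuncts and verify $i\to' l$ in each, using only transitivity of $\to$ together with the standing relation $a\to b$; for instance, if $i\to k$ while $k\to b$ and $a\to l$, then $i\to k\to b$ gives $i\to b$, hence $i\to' l$. All four cases close in the same elementary fashion.

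The step I expect to be the real content, rather than the recursion, is getting the unmixedness criterion right: unmixed bipartite graphs are governed by a \emph{preorder} (reflexivity and transitivity only), and crucially \emph{not} by a partial order, so antisymmetry must not be assumed. Indeed even-connection generally destroys antisymmetry --- it can merge two comparable indices into one equivalence class, as when $H'$ collapses to a complete bipartite graph --- and it is precisely because the criterion demands only transitivity that the modified relation $\to'$ survives. Confirming that the even-connected edge set is exactly $\{x_iy_k : i\to b,\ a\to k\}$, and that the boundary cases $i=a$ or $k=b$ reduce to pre-existing edges of $H$, is the other routine-but-necessary verification.
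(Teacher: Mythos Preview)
Your proposal is correct and follows essentially the same strategy as the paper: both reduce to $s=1$ via the chain identity of \cite[Lemma~3.7]{banerjee1} and then verify Villarreal's criterion for unmixed bipartite graphs on $G'$ by a four-case analysis. Your reformulation in terms of the preorder $i\to j :\Leftrightarrow x_iy_j\in E(H)$ and the augmented relation $i\to' j :\Leftrightarrow (i\to j)\text{ or }(i\to b\text{ and }a\to j)$ is a clean repackaging of exactly the paper's Cases~I--IV, which instead construct the relevant even-connections by hand; the four disjunct combinations you check for transitivity of $\to'$ correspond one-to-one with those four cases.
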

\begin{proof}
We prove the result using induction on s. 
Let $G$ be an unmixed 
bipartite graph. Then by \cite[Theorem 1.1]{Villarreal}, there exists
a partition $V_1=\{x_1,\ldots,x_n\}$ and
$V_2=\{y_1,\ldots,y_n\}$ with $V(G) = V_1 \cup V_2$. 
First we show that the graph $G'$ associated to $(I(G)^2 : e)$ is an unmixed 
bipartite graph for any edge $e$ in $G$. By Theorem \ref{even_con_bipartite}, 
$(I(G)^{2} : e)$ is bipartite on the same vertex set having the
same bipartition as in $G$.
Since $\{x_i,y_i\}\in E(G')$ for all $i$,  by \cite[Theorem 1.1]{Villarreal} 
we need to show that $\{x_i,y_k\} \in E(G')$, 
if $\{x_i,y_j\},\{x_j,y_k\} \in E(G')$ for distinct $i,j,k$.
 \vskip 2mm
\noindent
\textsc{Case I:} Suppose $\{x_i,y_j\},\{x_j,y_k\} \in E(G)$. 
Since $G$ is an unmixed bipartite graph, there is an edge $\{x_i,y_k\}
\in E(G)$, by \cite[Theorem 1.1]{Villarreal}. Hence $\{x_i,y_k\} \in E(G')$.
 \vskip 2mm
\noindent
\textsc{Case II:}  
Suppose $ \{x_i,y_j\} \in E(G)$ and $\{x_j,y_k\} \notin E(G)$.  Let
$x_jp_1p_2y_k$ be an even-connection between $x_j$ and $y_k$ with
respect to $e=p_1p_2$. Since $\{x_i,y_j\},\{x_j,p_1\} \in E(G)$, by
\cite[Theorem 1.1]{Villarreal}, there is an edge $\{x_i,p_1\} \in
E(G)$.  Therefore there is an even-connection $x_ip_1p_2y_k$ with
respect to $e$. Hence $\{x_i,y_k\} \in E(G')$.
\vskip 2mm
\noindent
\textsc{Case III:}        
If $ \{x_i,y_j\} \notin E(G)$ and $\{x_j,y_k\} \in E(G)$.  Let
$x_ip_1p_2y_j$ be an even-connection between $x_i$  and $y_j$ with
respect to $e=p_1p_2$.  Since $\{p_2,y_j\},\{x_j,y_k\} \in E(G)$, by
\cite[Theorem 1.1]{Villarreal}, there is an edge $\{p_2,y_k\} \in
E(G)$.  Therefore there is an even-connection $x_ip_1p_2y_k$, with
respect to $e$. Hence $\{x_i,y_k\} \in E(G')$.
\vskip 2mm
\noindent
\textsc{Case IV:} 
If $\{x_i,y_j\},\{x_j,y_k\} \notin E(G)$.  Consider the
even-connections $x_ip_1p_2y_j$ and  $x_jp_1p_2y_k$ between  $x_i$,
$y_j$ and $x_j$, $y_k$ respectively with respect to $e$.  Then there
is an even-connection $x_ip_1p_2y_k$ between $x_i$ and $y_k$ with
respect to $e$. Therefore $\{x_i,y_k\} \in E(G')$.  Hence $G'$ is an
unmixed bipartite graph.

\vskip 2mm \noindent
Assume by induction that for any unmixed bipartite graph $G$, 
$(I(G)^{s}:e_1 \cdots e_{s-1})$ is an unmixed bipartite graph
for any $(s-1)$-fold product and $s > 1$. 
By \cite[Lemma 3.7]{banerjee1}, we have $(I(G)^{s+1}:e_1 \cdots e_s)=
\big( (I(G)^2:e_i)^s: \prod_{j \neq i}e_j  \big )$. By the case $s=1$,
the graph associated to $(I(G)^2:e_i)$ is an unmixed bipartite graph,
say $G_i$. Therefore by induction the graph associated to
$(I(G_i)^s : \prod_{j\neq i}e_j)$ is an unmixed bipartite graph. This
completes the proof of the theorem.

\end{proof}
The following example shows that Theorem
\ref{unmi_colon} is not true if the graph is not bipartite.
\begin{example}\label{unmixed}
Let $I=(x_1x_4,x_1x_2,x_1x_3,x_2x_3,x_2x_5,x_3x_6)\subset
R=k[x_1,\ldots,x_6]$ and $G$ be the graph associated to $I$. Then $G$
is unmixed, but not bipartite. Taking $e_1 = \{x_1,x_2\}$, we get 
$\widetilde{(I^2:e_1)}=I+(x_4x_5,x_3x_5,x_3x_4,x_3y_1) \subset
R[y_1]$. Let $G'$ be the graph associated with
$\widetilde{(I^2:e_1)}$. Then it
can be seen that, $(x_1,x_3,x_5)$ and $(x_1,x_2,x_4,x_5,x_6,y_1)$ are
minimal vertex covers of $G'$. Therefore, $G'$ is not unmixed. 
\end{example}

A graph $G$ is called $H$-free for some graph $H$ if $G$ does not
contain an induced subgraph isomorphic to $H$. 
B{\i}y{\i}ko{\u{g}}lu and
Civan proved that if $G$ is bipartite $P_6$-free, then $\reg(I(G)) =
\ab(G) + 1$, \cite[Theorem 3.15]{biyikoglu_civan}. Below we prove that
if $G$ is $P_6$-free, then so is $G'$. 
This result is crucial in obtaining a precise expression, in the next
section, for the regularity of $I(G)^s$.

\begin{theorem}\label{bipartite_pk} 
If $G$ is a bipartite $P_k$-free graph for some $k \geq 4$, then so is
the graph $G'$ associated to $(I(G)^{s+1}:e_1 \cdots
e_s)$, for any $s$-fold product $e_1 \cdots e_s$ and $s \geq 1.$
\end{theorem}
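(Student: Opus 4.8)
The plan is to follow the strategy of Theorem \ref{unmi_colon} and first reduce to the case $s=1$. By \cite[Lemma 3.7]{banerjee1} we may write $(I(G)^{s+1}:e_1\cdots e_s)=\big((I(G)^2:e_i)^{s}:\prod_{j\neq i}e_j\big)$. Granting the single-colon case, the graph $G_i$ associated to $(I(G)^2:e_i)$ is again bipartite and $P_k$-free, and $G'$ is the graph associated to $(I(G_i)^{s}:\prod_{j\neq i}e_j)$; induction on $s$ applied to $G_i$ with the $(s-1)$-fold product then finishes the argument. Thus it suffices to treat the graph $G'$ associated to $(I(G)^2:e)$ for a single edge $e=\{p_1,p_2\}$. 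By Theorem \ref{even_con_bipartite}, $G'$ is bipartite with the same bipartition $V(G)=X\cup Y$, say $p_1\in X$ and $p_2\in Y$; and by Theorem \ref{even_connec_equivalent}, made explicit in the proof of Theorem \ref{cochord}, we have $E(G')=E(G)\cup\{\{a,b\}: a\in A,\ b\in B\}$, where $A=N_G(p_1)\setminus\{p_2\}\subseteq Y$ and $B=N_G(p_2)\setminus\{p_1\}\subseteq X$.

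Suppose, for contradiction, that $G'$ has an induced path $Q\colon q_1-q_2-\cdots-q_k$ on $k$ vertices, and choose $Q$ so as to use the fewest possible new edges, i.e. edges of $E(G')\setminus E(G)$. If $Q$ uses no new edge then $Q$ is an induced $P_k$ of $G$, contradicting the hypothesis, so $Q$ uses at least one new edge. Two facts drive the analysis. First, the new edges do not touch $p_1$ or $p_2$, so $N_{G'}(p_1)=N_G(p_1)=A\cup\{p_2\}$ and $N_{G'}(p_2)=N_G(p_2)=B\cup\{p_1\}$; hence in $G$ the vertex $p_1$ is adjacent precisely to those $A$-vertices, and $p_2$ precisely to those $B$-vertices, that occur among the $q_j$. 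Second, since each new edge joins a vertex of $A\subseteq Y$ to a vertex of $B\subseteq X$, along any maximal run of consecutive new edges of $Q$ the vertices alternate between $A$ and $B$; moreover, because consecutive vertices of such a run are joined only by a new edge, which is a non-edge of $G$, and non-consecutive vertices of $Q$ are non-adjacent in $G'$, the vertices of each such run form an independent set of $G$.

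The core step is a surgery that lifts a new edge back into $G$: if $\{q_i,q_{i+1}\}$ is a new edge with $q_i=a\in A$ and $q_{i+1}=b\in B$, then $a-p_1-p_2-b$ is a path of $G$ respecting the bipartition, and replacing the new edge by this detour turns $Q$ into a walk $W$ of $G$ on $k+2$ vertices. Using the independence of the run containing $\{a,b\}$ together with the precise neighbourhoods of $p_1,p_2$ recorded above, I would argue that $W$, possibly after a local shortcut around a chord incident to $p_1$ or $p_2$, yields an induced path of $G$ on at least $k$ vertices, or else a genuinely shorter induced $P_k$ of $G'$ using one fewer new edge; either outcome contradicts the $P_k$-freeness of $G$ or the minimal choice of $Q$. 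The main obstacle is exactly the chord control in this last step: the only chords of $W$ are edges from $p_1$ to other $A$-vertices of $Q$ and from $p_2$ to other $B$-vertices of $Q$, together with the degenerate possibilities $p_1\in V(Q)$ or $p_2\in V(Q)$. Handling these requires choosing the new edge to lift carefully, for instance an extremal one closest to an endpoint of $Q$, and tracking how the detour interacts with the neighbouring runs; this is where the bipartite $P_k$-free hypothesis on $G$ is used to exclude the long induced paths that such chords would otherwise force.
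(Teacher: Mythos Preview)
Your reduction to $s=1$ via \cite[Lemma 3.7]{banerjee1} and your description of $E(G')$ as $E(G)$ together with the complete bipartite set $A\times B$ are correct and match the paper. The gap is that your ``surgery plus chord control'' step is not carried out: you write ``I would argue that\ldots'' and explicitly flag the chord analysis as the main obstacle, but this is precisely where all the content of the proof lies. Without it, nothing prevents the shortcuts from collapsing your walk $W$ on $k+2$ vertices to a path on fewer than $k$ vertices, and your minimal-counterexample alternative (``an induced $P_k$ of $G'$ using one fewer new edge'') is not justified either, since the surgery changes the length.

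What the paper does at this point is sharper and avoids the lengthen-then-shorten manoeuvre altogether. The key observation (its Claims I and II) is that if $\{q_i,q_{i+1}\}$ is a new edge with $q_i\in A$ and $q_{i+1}\in B$, then for any $j\notin\{i,i+2\}$ one has $q_j\notin A$ (otherwise $\{q_j,q_{i+1}\}\in E(G')$ would violate inducedness of $Q$), and symmetrically $q_j\notin B$ for $j\notin\{i-1,i+1\}$; likewise $p_1\ne q_j$ for $j\ne i-1$ and $p_2\ne q_j$ for $j\ne i+2$. This immediately forces any induced $P_k$ in $G'$ to contain at most two new edges, and if there are two they must be consecutive. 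With the chords thus pinned down to positions $i-1,i,i+1,i+2$, the paper replaces the segment $q_{i-1}q_iq_{i+1}q_{i+2}$ by an explicit length-three segment in $G$ through $p_1$ or $p_2$ (for instance $q_{i-1}q_ip_1q_{i+2}$ when $p_2=q_{i+2}$, or $q_{i-1}p_1q_{i+1}q_{i+2}$ in the two-edge case), producing an induced $P_k$ in $G$ directly. Once you have the localisation of $A$- and $B$-vertices, the minimal-new-edge framing is unnecessary: every induced $P_k$ in $G'$ already has at most two new edges, so there is no induction to run.
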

\begin{proof} 
By \cite[Lemma 3.7]{banerjee1}, it is enough to prove the assertion for $s = 1$. 
Let $G$ be a
bipartite $P_k$-free graph, for some $k \geq 4$. First we show that
the graph $G'$ associated to $(I(G)^2 : e)$ is $P_k$-free
for any edge $e$ in $G$. Note that $G'$ is bipartite on the same
vertex set.
Suppose $G'$ has an induced path $P_k :
x_1x_2\cdots x_k$. First assume that $E(P_k) \setminus E(G)$ has only
one edge, say $\{x_i, x_{i+1}\}$. Let the  
even-connection be $x_ip_1p_2x_{i+1},$ where $e=\{p_1,p_2\}$. Note that
$\{x_i,x_{i+1}\} \cap \{p_1,p_2\} = \emptyset$. We first
show that the vertices $p_1$ and $p_2$ cannot be equal to or adjacent
to $x_j$ for $j < i-1$ and $j > i+2$.

\vskip 2mm \noindent
\textsc{Claim I:} $p_1 \neq x_j$ for $j \neq i-1$ and $p_2 \neq x_j$
for $j \neq i+2$.\\
If $p_1 = x_j$ for some $j \neq i-1,$ then $\{x_j,x_i\} \in
E(G)$. This contradicts the assumption that $P_k$ is an induced path.
Similarly if $p_2 = x_j$ for some $j \neq i+2$, then $\{x_j, x_{i+1}\}
\in E(G)$, which is again a contradiction.

\vskip 2mm \noindent
\textsc{Claim II:} $\{p_1,x_j\} \notin E(G)$ for any $j \neq i,i+2$ and
$\{p_2,x_j\} \notin E(G)$ for any $j \neq i-1,i+1$.\\
Suppose $\{p_1, x_j\} \in E(G)$ for some $j \neq i,i+2$. Then
$x_jp_1p_2x_{i+1}$ is an even-connection between $x_j$ and $x_{i+1}$
so that $\{x_j, x_{i+1}\} \in E(G')$.
Since $j \neq i, i+2$, this is a contradiction to the assumption that
$P_k$ is an induced path. Similarly, it can be seen that $\{p_2, x_j\}
\notin E(G)$ for $j \neq i-1,i+1$. 

\vskip 2mm \noindent
Now we deal with the remaining possibilities. If $p_1 = x_{i-1},$ then
$p_2 \neq x_{i+2}$ and hence we have a path $P':
x_{i-1}p_2x_{i+1}x_{i+2}$ in $G$. 
Similarly, if $p_2 = x_{i+2}$,
then $p_1 \neq x_{i-1}$ and hence we get a path $P':
x_{i-1}x_ip_1x_{i+2}$. 
Now suppose $\{p_1,x_{i+2}\} \in E(G)$. 
If $\{p_2,x_{i-1}\} \in E(G)$,
then there is an even-connection $x_{i-1}p_2p_1x_{i+2}$ which is a
contradiction. Therefore $\{p_2,x_{i-1}\} \notin E(G)$. Hence we have a
path $P' : x_{i-1}x_ip_1x_{i+2}$ in $G$. If $\{p_2,x_{i-1}\} \in
E(G)$, then $\{p_1,x_{i+2}\} \notin E(G)$ and hence we have a path
$P': x_{i-1}p_2x_{i+1}x_{i+2}$ in $G$.
\vskip 2mm \noindent
Since $p_1$ and $p_2$ cannot be equal to $x_j$ for $j < i-1$ and $j >
i+2$, replace the segment $x_{i-1}x_ix_{i+1}x_{i+2}$ in $P_k$ with
$P',$ to obtain an induced path $x_1\cdots x_{i-2}P'x_{i+3}\cdots x_k$
of length $k-1$ in $G$ which contradicts our hypothesis that $G$ is
$P_k$-free.
\vskip 2mm \noindent
Now suppose $E(P_k) \setminus E(G)$ has more than one edge. By the
proof of \textsc{Claim I} and \textsc{Claim II}, there cannot be more
than two pairs of vertices which are even-connected. Moreover, if there
are two even-connections, then the evenly connected vertices have to
be $\{x_{i-1},x_i\}$ and $\{x_i,x_{i+1}\}$ for some $i$. Let the 
even-connections be, $x_{i-1}p_1p_2x_i$ and $x_ip_2p_1x_{i+1}$. Note
that, in this case, for $r = 1,2$, $p_r \neq x_j$ for any $j$ and
$\{p_r,x_j\} \notin E(G)$ for $j \neq i-1,i,i+1$. Therefore we have
path $P':x_{i-1}p_1x_{i+1}x_{i+2}$ in $G$. Since $p_1$ cannot be equal to $x_j$ 
for $j < i-1$ and $j >
i+2$, replace the segment $x_{i-1}x_ix_{i+1}x_{i+2}$ in $P_k$ with
$P'$ to obtain an induced path  $x_1\cdots x_{i-2}P'x_{i+3}\cdots x_k$ with $k$ vertices in $G$, which contradicts our hypothesis that $G$ is
$P_k$-free. Hence $G'$ is $P_k$-free graph. 
\end{proof}

The following result compares the induced matching numbers of $G$ and
$G'$.
\begin{proposition}\label{even_con_ind} 
Let $G$ be a graph and $G'$ be the graph associated to
$\widetilde{(I(G)^{s+1} : e_1\cdots e_s)}$ for $e_1, \ldots,
e_s \in E(G)$. Then $\ab(G') \leq \ab(G)$.
\end{proposition}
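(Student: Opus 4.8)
The plan is to start from an arbitrary induced matching $\mathcal{M}=\{A_1,\ldots,A_m\}$ of $G'$ and manufacture, edge by edge, an induced matching of the same size inside $G$; this forces $\ab(G)\geq m=|\mathcal{M}|$, and as $\mathcal{M}$ is arbitrary it yields $\ab(G')\leq\ab(G)$. Recall from Theorem \ref{even_connec_equivalent} that every edge of $G'$ is either an edge of $G$, or a pair $\{u,v\}$ (possibly $u=v$, which after polarization becomes a whisker edge $\{u,z_u\}$ at a new leaf $z_u$) that is even-connected with respect to $e_1\cdots e_s$ via some path $u=p_0,p_1,\ldots,p_{2k+1}=v$ as in Definition \ref{even_connected}. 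I call the edges of the first kind \emph{old} and those of the second kind \emph{new}, and note that the only vertices of $G'$ outside $V(G)$ are the leaves $z_u$, each adjacent solely to $u$.

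For each new edge $A_i=\{a_i,b_i\}$ I would fix an even-connection path $P^i\colon a_i=p_0^i,p_1^i,\ldots,p_{2k_i+1}^i=b_i$ and replace $A_i$ by the genuine $G$-edge $B_i:=\{a_i,p_1^i\}$, the first edge of $P^i$ (for a whisker edge $\{a_i,z_{a_i}\}$ I keep the real endpoint $a_i$ and use the same rule, the leaf imposing no further constraint); old edges are left unchanged, $B_i:=A_i$. I then claim that $\mathcal{B}:=\{B_1,\ldots,B_m\}$ is an induced matching of $G$. Since distinct members of the matching $\mathcal{M}$ have distinct real endpoints $a_i$, the assignment $A_i\mapsto B_i$ is injective once $\mathcal{B}$ is shown to be a matching, so $|\mathcal{B}|=m$.

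The heart of the verification is the principle that any failure of $\mathcal{B}$ to be an induced matching produces a $G'$-edge joining two distinct members $A_i,A_j$ of $\mathcal{M}$, contradicting that $\mathcal{M}$ is induced. If the offending vertex is a retained endpoint $a_j$, it already lies in $A_j$ and the conflicting $G$-edge is itself an edge of $G'$ meeting $A_i$ and $A_j$. If the offending vertex is an auxiliary vertex $p_1^i$, the key observation is that a single extra $G$-edge can be \emph{prepended} to $P^i$: if $c\in N_G(p_1^i)$ is a vertex of $A_j$, then $c,p_1^i,p_2^i,\ldots,b_i$ is again an even-connection, since it reuses exactly the $e$-edges of $P^i$ and hence automatically satisfies condition (3) of Definition \ref{even_connected}; this gives $\{c,b_i\}\in E(G')$, the desired contradiction. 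Running this through the cases (retained/retained, retained/auxiliary, and auxiliary/auxiliary when the two auxiliary vertices coincide) disposes of all conflicts but one.

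The main obstacle is the remaining case: two \emph{new} replacement edges $B_i,B_j$ joined by a $G$-edge $\{p_1^i,p_1^j\}$ between their auxiliary vertices, neither of which lies in $\mathcal{M}$. Here the natural witness for a forbidden edge is obtained by splicing the reversed path $P^i$ and the forward path $P^j$ across $\{p_1^i,p_1^j\}$, producing an even-connection between $b_i$ and $b_j$; the parity and the placement of each $e$-edge work out, but this path uses the $e$-edges of \emph{both} $P^i$ and $P^j$, so condition (3) is no longer automatic and could a priori be violated. I expect this to be the technical crux, and would resolve it by taking the $P^i$ to be \emph{shortest} even-connection paths (so no $e$ is used more often than necessary), and, if needed, by adaptively choosing which of the two real endpoints of each new edge to retain so that such an auxiliary/auxiliary collision is avoided; worked small cases strongly suggest that a budget-respecting even-connection between some pair of $\mathcal{M}$-vertices always exists. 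Since for a general graph $(I(G)^{s+1}:e_1\cdots e_s)$ need not equal the iterated colon (as noted after Corollary \ref{bipartite_chord_com}), I would carry out this argument directly for the $s$-fold product rather than reducing to the case $s=1$.
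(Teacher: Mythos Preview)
Your approach mirrors the paper's almost exactly: start from an induced matching in $G'$, replace each new edge by the first $G$-edge of a chosen even-connection path, and argue that any failure of the resulting collection to be an induced matching in $G$ manufactures a forbidden $G'$-edge between members of the original matching. You correctly isolate the one nontrivial case---an edge $\{p_1^i,p_1^j\}$ in $G$ between two auxiliary vertices---and you correctly see that the spliced path $b_i,\ldots,p_2^i,p_1^i,p_1^j,p_2^j,\ldots,b_j$ need not satisfy condition~(3) of Definition~\ref{even_connected}.

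The gap is that your proposed fixes (shortest paths, adaptive endpoint choice) are speculative and do not actually settle the issue. The paper's resolution is a clean dichotomy that you are missing. Either some $e$-edge $\{p^i_{2\mu+1},p^i_{2\mu+2}\}$ of $P^i$ shares a vertex with some $e$-edge $\{p^j_{2\mu'+1},p^j_{2\mu'+2}\}$ of $P^j$; in that case \cite[Lemma~6.13]{banerjee} directly produces an even-connection from $a_i$ to one of $a_j,b_j$, giving the forbidden $G'$-edge. Or no such pair of $e$-edges shares a vertex; then the sets of $e_r$'s actually consumed by $P^i$ and by $P^j$ are disjoint (two occurrences of the same edge certainly share both vertices), so in the spliced path each $e_r$ is used only by whichever of $P^i$, $P^j$ used it originally, and condition~(3) is inherited. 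No minimality assumption and no adaptive choice of retained endpoint is needed.
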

\begin{proof}  
Let $\{f_1,\ldots,f_q,f_{q+1},\ldots,f_{r},f_{r+1},\ldots,f_t\}$ 
be an induced matching of $G'$, where
\begin{enumerate}
 \item for $\ell=1,\ldots,q, ~ f_\ell \in E(G);$ 
 \item for $\ell=q+1,\ldots,r$, $f_\ell = \{u_\ell,v_\ell\}$ and $u_\ell \neq v_\ell$
   are vertices of $G$ even-connected with respect to $e_1 \cdots e_s$. 
 \item for $\ell=r+1,\ldots,t$, $f_\ell = \{u_\ell,u_\ell'\}$, and $u_\ell$ is
   even-connected to itself with respect to $e_1 \cdots e_s$ and
   $u'_\ell$ is new vertex.
\end{enumerate}
Let $u_\ell=p_0^\ell p_1^\ell \cdots p_{2k_\ell+1}^\ell=v_\ell$, for $\ell=q+1,\ldots,t,$ be an even-connection
between $u_\ell$ and $v_\ell$ ($u_\ell$ may be equal to $v_\ell$) with respect to
$e_1\cdots e_s$.
For $i=q+1,\ldots,t$, let $f_i' = \{p_0^i,p_1^i\}$. 

\vskip 2mm
\noindent
\textbf{Claim:} $\{f_1, \ldots, f_q, f_{q+1}', \ldots, f_t'\}$ is an
induced matching for $G$.
\vskip 2mm
\noindent
\textit{Proof of the claim:} Suppose this is not an induced matching.
Then, either there is a common vertex for two of the edges or there
exists an edge in $G$ connecting two of the edges in the above set. Since
$\{f_1, \ldots, f_t\}$ is an induced matching, we can see that $f_i$
and $f_j'$ cannot have a
common vertex, for any $1 \leq i \leq q, ~q+1 \leq j \leq t$.
Suppose $f_\ell'=\{p_0^\ell,p_1^\ell\}$ and $f_m'=\{p_0^m,p_1^m\}$ have
a common vertex. If $p_0^\ell=p_0^m$ or $p_0^\ell=p_1^m$ or $p_1^\ell=p_0^m$,
then this contradicts the assumption that $\{f_\ell,f_m\}$
form an induced matching. If $p_1^\ell=p_1^m$, then
there is an even-connection $$p_0^m(p_1^m=p_1^\ell)p_2^\ell \cdots p_{2k_\ell+1}^\ell$$
between $p_0^m$ and $p_{2k_\ell+1}$ with respect to $e_1 \cdots e_s$, which
contradicts the assumption that $\{f_\ell,f_m\}$
form an induced matching. Therefore $f_\ell'$ and $f_m'$ cannot have a common vertex.
Also, since
$\{f_1, \ldots, f_q\}$ is an induced matching in $G$, there cannot be
an edge connecting $f_i$ and $f_j$. Therefore the two possibilities
are,
\begin{enumerate}
  \item there exists an edge connecting $f_i$ and $f_j'$;
  \item there exists an edge connecting $f_\ell'$ and $f_m'$.
\end{enumerate}
\noindent
\textsc{Case 1:} Let $f_i$ and $f_j'= \{p_0^{j},p_1^{j}\},$ for some
$1 \leq i \leq q$ and $q+1 \leq j \leq t$, be connected by an edge, say 
$\{x_\alpha, x_\beta\}$ in $G$.
If either $x_{\alpha}=p_0^{j}$ or $x_{\beta}=p_0^{j}$, then this is
a contradiction to the assumption that $f_i$ and
$f_j$ form an induced matching in $G'$. 
Therefore either $x_{\alpha}=p_1^{j}$ or $x_{\beta}=p_1^{j}$. Suppose
$x_\alpha = p_1^j$. Then there is an even-connection 
$x_{\beta}p_1^j \cdots p_{2k_j+1}^{j}$ 
in $G$. 
This is also in contradiction to
the assumption that $f_i$ and $f_j$ is an induced
matching in $G'$. Therefore $x_\alpha \neq p_1^j$. Similarly one can
prove that $x_\beta \neq p_1^j$. Therefore, there cannot be any common
edge $\{x_\alpha, x_\beta\}$ between $f_i$ and $f_j'$ for any $j =
q+1,\ldots,t$.

  \vskip 2mm
\noindent
\textsc{Case 2:}  Suppose there exists an edge, say $\{x_\alpha,
x_\beta\}$, between
$f_\ell'=\{p_0^{\ell},p_1^{\ell}\}$ and $f_m' = \{p_0^{m},p_1^{m}\}$.
\begin{enumerate}
  \item If $\{x_{\alpha},x_{\beta}\}=\{p_0^\ell,p_0^m\}$, then
	$\{x_{\alpha},x_{\beta}\}$ is a common edge of
	$f_\ell$ and $f_m$, which is a
	contradiction to the assumption that $\{f_\ell, f_m\}$ is an induced
	matching in $G'$.

  \item If $\{x_{\alpha},x_{\beta}\}=\{p_0^\ell,p_1^m\}$, then there is
	an even-connection $$p_0^\ell p_1^mp_2^m \cdots p_{2k_m+1}^m$$ between
	$p_0^\ell$ and $p_{2k_m+1}^m$ with respect to $e_1 \cdots e_s$, which is again a
	contradiction.  Similarly, one arrives at a contradiction if
	$\{x_{\alpha},x_{\beta}\} = \{p_1^\ell,p_0^m\}$. 
	
  \item Suppose $\{x_{\alpha},x_{\beta}\}=\{p_1^\ell,p_1^m\}$. 
	If $\{p^\ell_{2\mu+1},p^\ell_{2\mu+2}\}$ and $\{p^m_{2\mu_1+1},p^m_{2\mu_1+2}\}$ have a common vertex, for
	some $0 \leq \mu \leq k_\ell-1$, $0 \leq \mu_1 \leq k_m-1$, then by \cite[Lemma 6.13]{banerjee}, $p_0^\ell$
	is even-connected either to $p_0^m$ or to $p_{2k_m+1}^m$. 
	Both contradicts the assumption that $f_\ell, f_m$ is an
	induced matching in $G'$.
	Therefore $\{p^\ell_{2\mu+1},p^\ell_{2\mu+2}\}$ and $\{p^m_{2\mu_1+1},p^m_{2\mu_1+2}\}$ does not have a common vertex
	for any $0 \leq \mu \leq k_\ell-1$, $0 \leq \mu_1 \leq k_m-1$. Then there is
	an even-connection $$p_{2k_\ell+1}^\ell p_{2k_\ell}^\ell \cdots p_1^\ell p_1^m
	\cdots p_{2k_m+1}^m$$ between $p_{2k_\ell+1}^\ell$ and $p_{2k_m+1}^m$, 
	which is also a contradiction to the assumption that $\{f_\ell,f_m\}$ is an induced matching. 
\end{enumerate}
Therefore $\{f_1,\ldots,f_q, f_{q+1}', \ldots, f_t'\}$
  form an induced matching of $G$. Hence $\ab(G') \leq \ab(G)$.
\end{proof}
As a consequence of Proposition \ref{even_con_ind}, we obtain an
upper bound for $\reg(I(G)^s)$ 
when $G$ is a bipartite graph. 
\begin{corollary}\label{upperbound_ind} 
Let $G$ be a bipartite graph with partitions $X$ and $Y$.
Then for all $s \geq 1$, 
$$\reg(I(G)^s) \leq 2s + \frac{1}{2}\left(\ab(G) + \min\{|X|,
|Y|\}\right)-1.$$
\end{corollary}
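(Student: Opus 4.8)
The plan is to reduce the statement about all powers to a single combinatorial inequality for $G$ itself, and then to establish that inequality by a covering argument. First I would invoke Theorem \ref{upper_bound_reg}, which already gives $\reg(I(G)^s) \le 2s + \cochord(G) - 1$ for every $s \ge 1$. Comparing this with the desired bound, it suffices to prove the purely combinatorial estimate
\[
\cochord(G) \le \tfrac{1}{2}\bigl(\ab(G) + \min\{|X|,|Y|\}\bigr)
\]
for every bipartite graph $G$ with parts $X$ and $Y$; substituting it into the conclusion of Theorem \ref{upper_bound_reg} yields the corollary at once. Alternatively, and in the spirit in which the paper frames this result, one may prove only the case $s=1$, namely $\reg(I(G)) \le \tfrac12(\ab(G)+\min\{|X|,|Y|\})+1$, and then propagate it to all powers through Banerjee's recursion, Theorem \ref{regularity_compa}: each colon $(I(G)^{s+1}:m_\ell)$ polarizes to $I(G')$ with $G'$ bipartite on the \emph{same} bipartition by Theorem \ref{even_con_bipartite}, so $\min\{|X|,|Y|\}$ is unchanged, while $\ab(G') \le \ab(G)$ by Proposition \ref{even_con_ind} and $\reg(I(G)^{s+1}:m_\ell)=\reg(I(G'))$ by Theorem \ref{polarization}; an induction on $s$ then gives the bound. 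This is the point at which Proposition \ref{even_con_ind} enters.

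The heart of the matter is the combinatorial inequality. My approach would be to reformulate $\cochord(G)$ combinatorially: since $G$ is bipartite, every subgraph is bipartite, and a bipartite graph is co-chordal precisely when it is gap-free, i.e.\ a chain graph with nested neighborhoods, as recorded in the discussion following Theorem \ref{upper_bound_reg}. Hence $\cochord(G)$ equals the minimum number of gap-free subgraphs needed to cover $E(G)$, and two edges lie in a common gap-free subgraph exactly when they do not form an induced $2K_2$. In these terms $\ab(G)$ is the largest number of pairwise incompatible edges, so the claim asserts that $E(G)$ can be partitioned into at most $\tfrac12(\ab(G)+\min\{|X|,|Y|\})$ gap-free classes.

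To produce such a cover I would set $p = \min\{|X|,|Y|\} = |X|$ and argue by induction on $p$, deleting a suitably chosen vertex $x \in X$ with its incident edges. If deleting $x$ strictly lowers the induced matching number, the inductive cover of $G \setminus x$ uses at most $\tfrac12\bigl((p-1)+(\ab(G)-1)\bigr)$ gap-free subgraphs, and adjoining the star at $x$ as one new class gives exactly $\tfrac12(p+\ab(G))$ classes. If instead $x$ is inessential for the induced matching, the budget forbids a new class, so the star at $x$ must be absorbed into the existing classes by splitting neighborhoods along a chain, exactly as one covers a crown graph or an even cycle $C_{2k}$ by few chains rather than by $p$ stars. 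The main obstacle is precisely this absorption step together with the attendant parity bookkeeping: showing that $x$ can always be chosen so that either its removal lowers $\ab(G)$ or its edges can be redistributed among the existing chains while preserving gap-freeness. Securing this is what produces the factor $\tfrac12$ that the trivial star cover (yielding only $\cochord(G) \le p$) misses; the tight cases $G=nK_2$ and $G=C_{2k}$, where the inequality holds with equality, show that no cruder cover can succeed and that the induction must exploit the induced matching delicately.
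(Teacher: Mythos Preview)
Your ``alternative'' route is exactly the paper's proof. The paper takes the base case $s=1$ as a black box from B{\i}y{\i}ko\u{g}lu--Civan \cite[Theorem~4.16]{projective_civam} and then inducts via Theorem~\ref{regularity_compa}, using Theorem~\ref{even_con_bipartite} to keep the bipartition fixed and Proposition~\ref{even_con_ind} to guarantee $\ab(G')\le\ab(G)$---precisely the argument you spell out in your second paragraph. So that portion of your proposal is correct and coincides with the paper; the only thing you are missing is the awareness that the $s=1$ inequality is already in the literature and need not be reproved.

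Your primary approach, however, aims at something strictly stronger: the purely combinatorial bound $\cochord(G)\le\tfrac12\bigl(\ab(G)+\min\{|X|,|Y|\}\bigr)$, which would bypass Proposition~\ref{even_con_ind} altogether and plug directly into Theorem~\ref{upper_bound_reg}. This is not what the paper does, and your sketch does not establish it. You correctly isolate the crux---when deleting a vertex $x$ fails to lower $\ab(G)$, the star at $x$ must be absorbed into the existing chain cover at no extra cost---but you provide no mechanism for that absorption, only the observation that the tight examples demand it. That is the entire content of the inequality, so leaving it as an ``obstacle'' is a genuine gap, not a detail. (Incidentally, your claim that $C_{2k}$ realizes equality is off: for $C_6$ one has $\cochord=2$ while $\tfrac12(2+3)=\tfrac52$.) Note too that the inequality on $\cochord$ is not known to follow from the cited regularity bound, since Woodroofe's inequality $\reg(I(G))\le\cochord(G)+1$ goes the wrong way; so this route, even if it succeeds, would be new work rather than a reorganization of existing results.
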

\begin{proof}
If $s = 1$, then this is proved in \cite[Theorem
4.16]{projective_civam}.  Let $G'$ be the graph associated to
$(I(G)^{s+1} : e_1\cdots e_s)$ for some edges $e_1, \ldots, e_s$ in
$G$. Since $\ab(G') \leq \ab(G)$ and $G'$ is also bipartite with
partitions $X$ and $Y$, the assertion now follows from Theorem
\ref{regularity_compa} and induction. 
 \end{proof}
 
By \cite[Theorem 4.5]{selvi_ha} and Corollary \ref{upperbound_ind}, for any bipartite graph $G$, we have
\begin{equation}\label{eq:31}
2s+\ab(G)-1 \leq \reg(I(G)^s) \leq 2s+ \frac{1}{2}\left(\ab(G) + \min\{|X|,
|Y|\}\right)-1 \text{ for any } s \geq 1.
\end{equation}
Let $G$ be a bipartite graph with partitions $X$ and $Y$, say $|X|
\leq |Y|$. Then
by Corollary \ref{upperbound_ind}, $$\reg(I(G)^s) \leq 2s + |X|-1 \text{ for any } s \geq 1.$$

\begin{remark}
It is easy to see that for a bipartite graph $G$, $\ab(G) \leq
\min\{|X|, |Y|\}$. At the same time, the difference between 
$\ab(G)$ and $\min\{|X|, |Y|\}$ could be arbitrarily large, for
example in the case of complete bipartite graphs. If $\ab(G) =
\min\{|X|,|Y|\}$ or $\ab(G) = \min\{|X|,|Y|\} - 1$, then from
(\ref{eq:31}), it follows that $\reg(I(G)^s) = 2s + \ab(G) - 1$.
For example, let $H$ be a bipartite graph with $V(H) = X \cup Y$. Let
$X = \{x_1,\ldots, x_n\}$ and $n \leq |Y|$. Let $G$ be the graph
obtained by attaching at least one pendant vertex to each $x_i$'s in $H$.
Then $\ab(G) = n$ and hence $\reg(I(G)^s) = 2s + n - 1$.
\end{remark}

Another immediate consequence of the comparison between the induced
matching numbers is the relation between the
$nK_2$-free property of $G$ and $G'$.
\begin{corollary}\label{nK2-free}
If $G$ is an $nK_2$-free graph for some $n \geq 1$, then for any
$s$-fold product $e_1\cdots e_s$, the graph $G'$ associated to
$(\widetilde{I(G)^{s+1}:e_1\cdots e_s})$ is $nK_2$-free.  
\end{corollary}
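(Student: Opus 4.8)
The plan is to reduce the statement entirely to Proposition \ref{even_con_ind} by reinterpreting the $nK_2$-free condition in terms of the induced matching number $\ab$. The first step is to observe that the notion of $nK_2$ introduced in Section 2 is nothing but an induced matching of size $n$: a set of $n$ pairwise non-adjacent edges $\{f_1,\ldots,f_n\}$ with no edge of $G$ joining an endpoint of $f_i$ to an endpoint of $f_j$ for $i \neq j$ is exactly an induced subgraph of $G$ that is a disjoint union of $n$ edges. Hence $G$ contains an $nK_2$ if and only if $\ab(G) \geq n$, equivalently, $G$ is $nK_2$-free if and only if $\ab(G) \leq n-1$.

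With this reformulation in hand, the second step is immediate. Suppose $G$ is $nK_2$-free, so that $\ab(G) \leq n-1$, and let $G'$ be the graph associated to $\widetilde{(I(G)^{s+1}:e_1\cdots e_s)}$. By Proposition \ref{even_con_ind}, $\ab(G') \leq \ab(G) \leq n-1$, so $G'$ admits no induced matching of size $n$. Translating back through the equivalence of the first step, $G'$ contains no $nK_2$, that is, $G'$ is $nK_2$-free, which is the desired conclusion.

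There is essentially no obstacle here beyond making the first reformulation precise; all the combinatorial content has already been carried out in the proof of Proposition \ref{even_con_ind}, where an induced matching of $G'$ is transported to an induced matching of $G$ of the same cardinality by replacing each even-connected edge with its initial edge $\{p_0^\ell, p_1^\ell\}$. The only point meriting a moment's care is confirming that the $n$ edges furnished by the definition really constitute the edge set of an induced subgraph, rather than merely a matching; but this is precisely what the $nK_2$ definition asserts, so the equivalence holds verbatim and the corollary follows.
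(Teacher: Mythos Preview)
Your proof is correct and follows exactly the same approach as the paper: both argue that $nK_2$-free is equivalent to $\ab(G) < n$, then invoke Proposition~\ref{even_con_ind} to conclude $\ab(G') \leq \ab(G) < n$. The paper's version is simply terser, stating the equivalence without elaboration.
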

\begin{proof}
If $G$ is $nK_2$-free, then $\ab(G) < n$. By Proposition \ref{even_con_ind},
$\ab(G') \leq \ab(G)$. Hence $\ab(G') < n$ and hence $G'$ is $nK_2$-free.
\end{proof}
Taking $n=2$ in the previous corollary, we obtain \cite[Lemma
6.14]{banerjee}. Using this result and 
\cite[Proposition 1.3]{nevo_peeva} we get:
\begin{corollary}\label{gapfree implies even_gapfree} 
If $G$ is gap-free, then so is the graph $G'$ associated to $\widetilde{(I(G)^{s+1}:e_1 \cdots e_s)}$, for every $s$-fold
product $e_1 \cdots e_s$. In other words, if $I(G)$ has linear
presentation, then so has $(I(G)^{s+1}:e_1 \cdots e_s)$, for every $s$-fold
product $e_1 \cdots e_s$.
 \end{corollary}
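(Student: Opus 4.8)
The plan is to deduce this directly from Corollary \ref{nK2-free} together with the cited characterization of linear presentation in terms of gap-freeness, so no new combinatorial argument is needed. First I would recall that, by the definitions in the preliminaries, a graph is gap-free precisely when it is $2K_2$-free. Hence, assuming $G$ is gap-free, I would simply invoke Corollary \ref{nK2-free} with $n=2$: since $G$ is $2K_2$-free, so is the associated graph $G'$, which is to say $G'$ is gap-free. This settles the first assertion with no further computation.

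For the second (``in other words'') assertion I would use the equivalence between gap-freeness and linear presentation. By \cite[Proposition 1.3]{nevo_peeva}, for any graph $H$ the edge ideal $I(H)$ has a linear presentation if and only if $H$ is gap-free. Applying this to both $G$ and $G'$, the hypothesis that $I(G)$ has a linear presentation is equivalent to $G$ being gap-free, and the conclusion that $I(G')$ has a linear presentation is equivalent to $G'$ being gap-free; the first part of the corollary bridges the two.

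Finally, I would transfer the homological statement from $I(G')$ to the colon ideal itself. By Theorem \ref{even_connec_equivalent} the ideal $(I(G)^{s+1}:e_1\cdots e_s)$ is generated in degree two, so its polarization $\widetilde{(I(G)^{s+1}:e_1\cdots e_s)}$ is a squarefree quadratic monomial ideal, namely $I(G')$. Since polarization leaves all graded Betti numbers unchanged, Theorem \ref{polarization} gives $\beta_{\ell,j}(R/(I(G)^{s+1}:e_1\cdots e_s)) = \beta_{\ell,j}(\widetilde{R}/I(G'))$ for all $\ell,j$; in particular $(I(G)^{s+1}:e_1\cdots e_s)$ has a linear presentation if and only if $I(G')$ does. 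Chaining these equivalences, the assumption that $I(G)$ has a linear presentation forces $G$, and hence $G'$, to be gap-free, whence $I(G')$, and therefore $(I(G)^{s+1}:e_1\cdots e_s)$, has a linear presentation as well.

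I do not anticipate a genuine obstacle, since the statement merely repackages Corollary \ref{nK2-free} and the two cited facts. The only point demanding care is the bookkeeping that links the (possibly non-squarefree) colon ideal to its squarefree polarization $I(G')$ through Theorem \ref{polarization}, so that the notion of linear presentation transfers correctly between the two.
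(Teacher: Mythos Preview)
Your proposal is correct and follows essentially the same approach as the paper: specialize Corollary~\ref{nK2-free} to $n=2$ and then invoke \cite[Proposition 1.3]{nevo_peeva}. Your added care in passing from $I(G')$ back to the colon ideal via Theorem~\ref{polarization} is a welcome detail that the paper leaves implicit.
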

%
In \cite{banerjee_thesis}, Banerjee posed questions on the relation
between $I(G)$ and $(I(G)^{s+1} : e_1\cdots e_s)$ for some edges $e_1,
\ldots, e_s$ in $G$. In particular, he asked
\begin{question}\cite[Question 6.2.2]{banerjee_thesis}
  Classify $G$ and $e_1,\ldots,e_s$ such that $\reg(I(G)) \geq
  \reg(I(G)^{s+1}:e_1\cdots e_s)$.
\end{question}
As an application of our results Corollary \ref{bipartite_chord_com}
and Proposition \ref{even_con_ind}, we obtain some sufficient
conditions for which the above inequality holds true.
\begin{proposition}\label{reg-g-g'}
Let $G$ be any graph and $e_1,\ldots, e_s$ be edges of $G$, for some
$s \geq 1$. Let $G'$ be the graph associated to $\widetilde{(I(G)^{s+1}
: e_1\cdots e_s)}$.
Then the inequality
 $\reg(I(G)) \geq \reg(I(G)^{s+1}: e_1\cdots e_s)$ holds true if
\begin{enumerate}
  \item $\reg(I(G)^{s+1}:e_1 \cdots
	e_s)=\ab(G')+1$; ~ ~ or
  \item $G$ is bipartite with $\reg(I(G))=\cochord(G)+1$.
\end{enumerate}
\end{proposition}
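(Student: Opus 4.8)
The plan is to reduce the proposition, in both cases, to a short chain of inequalities assembled from results already established earlier in the paper. The key preliminary observation, used in both cases, is that regularity is invariant under polarization: by Theorem \ref{polarization} we have $\reg(I(G)^{s+1}:e_1\cdots e_s) = \reg(\widetilde{(I(G)^{s+1}:e_1\cdots e_s)}) = \reg(I(G'))$, since $\widetilde{(I(G)^{s+1}:e_1\cdots e_s)}$ is by definition the edge ideal of the associated graph $G'$. With this identification in hand I treat the two sufficient conditions separately.

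For (1), I would combine Katzman's lower bound with the comparison of induced matching numbers. Starting from the hypothesis $\reg(I(G)^{s+1}:e_1\cdots e_s) = \ab(G') + 1$, I apply Proposition \ref{even_con_ind}, which gives $\ab(G') \leq \ab(G)$, to obtain $\reg(I(G)^{s+1}:e_1\cdots e_s) \leq \ab(G) + 1$. The lower bound $\ab(G) + 1 \leq \reg(I(G))$ from (\ref{ag_reg_chg}) then closes the argument. Thus (1) is a two-step inequality requiring no new input, and it needs no hypothesis on $G$ beyond its being a graph.

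For (2), I would instead combine Woodroofe's upper bound with the comparison of co-chordal cover numbers. Using the polarization identity, $\reg(I(G)^{s+1}:e_1\cdots e_s) = \reg(I(G')) \leq \cochord(G') + 1$ by the upper bound in (\ref{ag_reg_chg}). Since $G$ is bipartite, Corollary \ref{bipartite_chord_com} gives $\cochord(G') \leq \cochord(G)$, whence $\reg(I(G')) \leq \cochord(G) + 1$. The hypothesis $\reg(I(G)) = \cochord(G) + 1$ then yields $\reg(I(G)^{s+1}:e_1\cdots e_s) \leq \reg(I(G))$, as desired.

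The argument is essentially bookkeeping, so I do not anticipate a genuine obstacle; the only point demanding care is keeping the regularity conventions consistent when invoking the polarization equivalence together with the two halves of (\ref{ag_reg_chg}), i.e. confirming that the ``$+1$'' shifts line up across Theorem \ref{polarization}, Katzman's lower bound, and Woodroofe's upper bound. It is worth recording that the two cases draw on complementary hypotheses: the induced-matching comparison of Proposition \ref{even_con_ind} needs no assumption on $G$, so case (1) is valid for arbitrary $G$, whereas the co-chordal comparison of Corollary \ref{bipartite_chord_com} requires $G$ to be bipartite, which is precisely the hypothesis imposed in case (2).
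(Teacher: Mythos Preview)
Your proof is correct and follows essentially the same approach as the paper: case (1) chains the hypothesis with Proposition~\ref{even_con_ind} and the Katzman lower bound in~(\ref{ag_reg_chg}), while case (2) chains Woodroofe's upper bound in~(\ref{ag_reg_chg}) with Corollary~\ref{bipartite_chord_com} and the hypothesis. Your explicit invocation of Theorem~\ref{polarization} is a harmless clarification that the paper leaves implicit (and in case~(2) is in fact unnecessary, since for bipartite $G$ the colon ideal is already squarefree by Theorem~\ref{even_con_bipartite}).
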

\begin{proof}
 If $\reg(I(G)^{s+1}:e_1 \cdots e_s)=\ab(G')+1$, then
$$
\begin{array}{lcll}
    \reg(I(G)^{s+1}:e_1 \cdots e_s)&=&\ab(G')+1 &  \\
	& \leq & \ab(G)+1 & \mbox{ (by Proposition }\ref{even_con_ind})\\
    &=&\reg(I(G)). & \mbox{ (by (\ref{ag_reg_chg})) }
\end{array}
$$
If $G$ is bipartite and $\reg(I(G))=\cochord(G)+1$, then
$$
\begin{array}{lcll}
    \reg(I(G)^{s+1}:e_1 \cdots e_s)& \leq & \cochord(G')+1 & \mbox{ (by (\ref{ag_reg_chg})}) \\
	& \leq & \cochord(G)+1 & \mbox{ (by Corollary } \ref{bipartite_chord_com})\\
        &=& \reg(I(G)). 
\end{array}
$$
\end{proof}
The \textit{chromatic number} denoted by $\chi(G)$ is the smallest
number of colors possible in a proper vertex coloring of $G$, see \cite[Definition 5.1.4]{west}.
Note that, if $G$ is bipartite graph, then $\alpha(G)=\chi(G^c)$, see \cite[Chapter 8]{west}.
We now recall some results from the literature:

\begin{remark}\label{ag=cg}
Let $G$ be a graph. 
\begin{enumerate}
  \item If $G$ is unmixed bipartite, then Woodroofe proved that
	$\ab(G) = \cochord(G)$, \cite[Theorem 16]{russ}.

  \item If $G$ is a  graph, then Woodroofe proved that
	$\cochord(W(G)) = \chi(G^c)$ and $\alpha(G) = \ab(W(G))$, \cite[Lemma
	21]{russ}.

  \item If $G$ is a weakly chordal graph, then
	Busch-Dragan-Sritharan proved that $\ab(G) = \cochord(G)$,
	\cite[Proposition 3]{busch_dragan_sritharan}.
\end{enumerate}
\end{remark}
As an immediate consequence, we have 
\begin{corollary}\label{banerjee-qn}
Let $G$ be a bipartite graph and $e_1, \ldots, e_s$ be edges of $G$.
Then $\reg(I(G)^{s+1} : e_1\cdots e_s) \leq \reg(I(G))$ if 
\begin{enumerate}
  \item $G$ is unmixed;
  \item $G$ is weakly chordal;
  \item $G = W(H)$ for some bipartite graph $H$  ~ or
  \item $G$ is $P_6$-free;
\end{enumerate}
\end{corollary}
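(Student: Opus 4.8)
The plan is to deduce Corollary \ref{banerjee-qn} from the two sufficient conditions established in Proposition \ref{reg-g-g'}, matching each of the four listed hypotheses to one of those conditions. The overarching strategy is to verify, in each case, the hypothesis of part (2) of Proposition \ref{reg-g-g'}, namely that $G$ is bipartite with $\reg(I(G)) = \cochord(G) + 1$. Since all four classes are bipartite by assumption, the entire task reduces to establishing the \emph{single} equality $\reg(I(G)) = \cochord(G) + 1$ for each class. I would note at the outset that the general bounds in (\ref{ag_reg_chg}) give $\ab(G) + 1 \leq \reg(I(G)) \leq \cochord(G) + 1$, so it suffices to prove the reverse inequality $\reg(I(G)) \geq \cochord(G) + 1$, and in practice this will follow by sandwiching: if one can show $\ab(G) = \cochord(G)$, then both outer terms in (\ref{ag_reg_chg}) coincide and force $\reg(I(G)) = \cochord(G) + 1$.

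First I would treat cases (1), (2), and (3) simultaneously, since each is handled by an explicit combinatorial identity recorded in Remark \ref{ag=cg}. For case (1), an unmixed bipartite graph satisfies $\ab(G) = \cochord(G)$ by Remark \ref{ag=cg}(1); for case (2), a weakly chordal graph satisfies $\ab(G) = \cochord(G)$ by Remark \ref{ag=cg}(3). In each instance, combining this equality with (\ref{ag_reg_chg}) pins down $\reg(I(G)) = \cochord(G) + 1$, which is exactly the hypothesis of Proposition \ref{reg-g-g'}(2), so the desired inequality follows. For case (3), where $G = W(H)$ is the whiskered graph of a bipartite graph $H$, Remark \ref{ag=cg}(2) gives $\cochord(W(H)) = \chi(H^c)$ and $\alpha(H) = \ab(W(H))$; since $H$ is bipartite, the relation $\alpha(H) = \chi(H^c)$ noted just before Remark \ref{ag=cg} yields $\ab(W(H)) = \alpha(H) = \chi(H^c) = \cochord(W(H))$, again delivering the equality needed to invoke Proposition \ref{reg-g-g'}(2).

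For case (4), where $G$ is bipartite and $P_6$-free, I would instead appeal directly to the result of B\i y\i ko\u{g}lu and Civan cited earlier, which states that $\reg(I(G)) = \ab(G) + 1$ for bipartite $P_6$-free graphs. This identity does not obviously match part (2) of Proposition \ref{reg-g-g'}, so my plan here is to route through part (1) instead: by Theorem \ref{bipartite_pk} the graph $G'$ associated to $(I(G)^{s+1} : e_1 \cdots e_s)$ is again bipartite and $P_6$-free, so applying the same B\i y\i ko\u{g}lu--Civan equality to $G'$ gives $\reg(I(G)^{s+1} : e_1 \cdots e_s) = \ab(G') + 1$. This is precisely the hypothesis of Proposition \ref{reg-g-g'}(1), from which the conclusion $\reg(I(G)^{s+1} : e_1 \cdots e_s) \leq \reg(I(G))$ follows immediately.

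The anticipated main obstacle is case (4): unlike the first three, the $P_6$-free case cannot be resolved by a static equality $\ab(G) = \cochord(G)$ on $G$ alone, since it is not known whether that equality holds for bipartite $P_6$-free graphs (as the paper explicitly remarks earlier). The crux is therefore recognizing that one must exploit the \emph{closure} of the $P_6$-free bipartite class under passing to $G'$, which is exactly what Theorem \ref{bipartite_pk} provides, and then applying the regularity formula to $G'$ rather than to $G$. Once that structural observation is in place, the verification is routine. The remaining cases are essentially bookkeeping: each amounts to citing the appropriate part of Remark \ref{ag=cg} and chaining it with (\ref{ag_reg_chg}).
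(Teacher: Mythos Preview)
Your proposal is correct and follows essentially the same approach as the paper's own proof: cases (1)--(3) are handled via Remark \ref{ag=cg} (together with the perfection of bipartite graphs for case (3)) to obtain $\ab(G)=\cochord(G)$ and hence the hypothesis of Proposition \ref{reg-g-g'}(2), while case (4) passes to $G'$ via Theorem \ref{bipartite_pk}, applies the B{\i}y{\i}ko{\u{g}}lu--Civan formula to $G'$, and invokes Proposition \ref{reg-g-g'}(1). Your write-up is in fact more explicit than the paper's in spelling out the chain $\ab(W(H))=\alpha(H)=\chi(H^c)=\cochord(W(H))$ and in noting why case (4) must be routed through part (1) rather than part (2).
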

\begin{proof}
If $G$ is unmixed, weakly chordal or $G = W(H)$ for some bipartite
graph $H$, then the assertion follows from Remark \ref{ag=cg} and
Proposition \ref{reg-g-g'}. If $G$ is $P_6$-free, then by Theorem
\ref{bipartite_pk} $G'$ is also $P_6$-free. By  \cite[Theorem
3.15]{biyikoglu_civan}, $\reg(I(G')) = \ab(G') + 1$. Now the
assertion follows from Proposition \ref{reg-g-g'}.
\end{proof}

Below, we present an example to show that the inequalities
in Corollary \ref{bipartite_chord_com}, Proposition \ref{even_con_ind}
and Proposition \ref{reg-g-g'} could be strict.
\begin{example}\label{gapfree_example}
Let $G = C_6$ and $I = I(G)
=(x_1x_2,x_2x_3,x_3x_4,x_4x_5,x_5x_6,x_6x_1)\subset
k[x_1,\ldots,x_6]$. Then $(I^3:x_2x_3x_4x_5)=I+(x_1x_4,x_3x_6)$. Let
$G'$ be the graph associated to $(I^3:x_2x_3x_4x_5)$. It can be
easily seen that $\cochord(G)=\ab(G) = 2$ and $\cochord(G')=\ab(G') =
1$. By (\ref{ag_reg_chg}), $\reg(I)=3$ and $\reg(I^3:x_2x_3x_4x_5)=2$.
\end{example}

\section{Precise expressions for asymptotic regularity}\label{powers_regularity}
In this section, we apply Theorem \ref{upper_bound_reg} to obtain 
precise expressions for the regularity of powers of edge ideals of
various subclasses of bipartite graphs. 
We begin the study with some classes of graphs $G$ for which $\ab(G) =
\cochord(G)$. We then use
(\ref{eq:3}) to prove that $\reg(I(G)^s) = 2s+\ab(G)-1$ for such
graphs. 

\begin{corollary}\label{thm:ag=cg}
  Let $G$ be a bipartite graph. If
  \begin{enumerate}
	\item $G$ is unmixed; \footnote{In a personal
		communication, we have been informed that Banerjee and
		Mukundan have also obtained Corollary \ref{thm:ag=cg}(1).} 
	\item $G = W(H)$ for some bipartite graph $H$;
	\item $G$ is weakly chordal, ~ or
	\item If $G$ is $P_6$-free graph, 
  \end{enumerate}
then for all $s \geq 1, 
	  ~\reg(I( G)^s) = 2s + \ab(G) - 1$.
\end{corollary}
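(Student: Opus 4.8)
The plan is to sandwich $\reg(I(G)^s)$ between the two bounds recorded in (\ref{eq:3}). For \emph{every} bipartite graph the lower bound $2s+\ab(G)-1 \leq \reg(I(G)^s)$ holds for all $s \geq 1$ by \cite[Theorem 4.5]{selvi_ha}, so in each of the four cases it suffices to produce the matching upper bound $\reg(I(G)^s) \leq 2s+\ab(G)-1$; equality then follows at once.

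For the first three classes I would first establish the purely combinatorial identity $\ab(G) = \cochord(G)$ and then simply feed it into Theorem \ref{upper_bound_reg}. In case (1) this identity is Woodroofe's result recorded in Remark \ref{ag=cg}(1), and in case (3) it is the Busch--Dragan--Sritharan result of Remark \ref{ag=cg}(3). For case (2), writing $G = W(H)$ with $H$ bipartite, I would note that $W(H)$ is again bipartite (each whisker can be placed in the opposite part) and combine Remark \ref{ag=cg}(2) with the equality $\alpha(H) = \chi(H^c)$ for bipartite $H$ to obtain $\cochord(W(H)) = \chi(H^c) = \alpha(H) = \ab(W(H))$. Once $\ab(G) = \cochord(G)$ is known, Theorem \ref{upper_bound_reg} yields $\reg(I(G)^s) \leq 2s+\cochord(G)-1 = 2s+\ab(G)-1$, which together with the lower bound gives equality for all $s \geq 1$.

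The fourth class, bipartite $P_6$-free graphs, is the genuinely different and hardest case, since here it is not known whether $\ab(G)=\cochord(G)$, so the bound $\reg(I(G)^s)\leq 2s+\cochord(G)-1$ need not be tight and a direct induction on $s$ is required. The base case $s=1$ is $\reg(I(G)) = \ab(G)+1$, which is exactly \cite[Theorem 3.15]{biyikoglu_civan}. For the inductive step I would apply Banerjee's inequality (Theorem \ref{regularity_compa}): let $e_1\cdots e_s$ be a minimal generator of $I(G)^s$ and let $G'$ be the graph associated to $\widetilde{(I(G)^{s+1} : e_1 \cdots e_s)}$. By Theorem \ref{bipartite_pk}, $G'$ is again bipartite $P_6$-free, so \cite[Theorem 3.15]{biyikoglu_civan} gives $\reg(I(G')) = \ab(G')+1$, while Proposition \ref{even_con_ind} gives $\ab(G') \leq \ab(G)$; combining these with Theorem \ref{polarization} (polarization preserves regularity),
$$\reg(I(G)^{s+1} : e_1\cdots e_s) = \reg(I(G')) = \ab(G')+1 \leq \ab(G)+1.$$
Inserting this and the induction hypothesis $\reg(I(G)^s) \leq 2s+\ab(G)-1$ into Theorem \ref{regularity_compa} gives
$$\reg(I(G)^{s+1}) \leq \max\{\ab(G)+1+2s,\ 2s+\ab(G)-1\} = 2(s+1)+\ab(G)-1,$$
which is the required upper bound, and the lower bound closes the argument.

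The main obstacle I anticipate is precisely case (4): all of its weight rests on the stability statement of Theorem \ref{bipartite_pk} (that the even-connection construction preserves the bipartite $P_6$-free property) together with the monotonicity $\ab(G') \leq \ab(G)$ of Proposition \ref{even_con_ind}, because without the equality $\ab = \cochord$ there is no direct combinatorial sandwich, and one must instead control the regularity of \emph{every} colon ideal appearing in Banerjee's inequality. By contrast, cases (1)--(3) each reduce to verifying the single equality $\ab(G)=\cochord(G)$ and then quoting the already-established bound (\ref{eq:3}).
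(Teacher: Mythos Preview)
Your proof is correct and follows essentially the same approach as the paper: cases (1)--(3) are handled via the identity $\ab(G)=\cochord(G)$ from Remark~\ref{ag=cg} together with the sandwich~(\ref{eq:3}), and case~(4) via induction on $s$ using Theorems~\ref{regularity_compa} and~\ref{bipartite_pk}, Proposition~\ref{even_con_ind}, and \cite[Theorem 3.15]{biyikoglu_civan}. The only cosmetic difference is that the paper packages the colon-ideal bound for case~(4) into Corollary~\ref{banerjee-qn} rather than writing the chain $\reg(I(G'))=\ab(G')+1\leq\ab(G)+1$ out explicitly.
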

\begin{proof}
The assertions, (1), (2) and (3) follows from Remark \ref{ag=cg} and
(\ref{eq:3}). If $G$ is $P_6$-free, then so is $G'$. Therefore, the
result now follows from Theorem \ref{regularity_compa} and Corollary
\ref{banerjee-qn}.
\end{proof}
Observe that bipartite $P_5$-free  graphs are chordal bipartite.
Therefore by Corollary \ref{thm:ag=cg}(3), $\reg(I(G)^s)=2s+\ab(G)-1$ for
all $s \geq 1$. 
In general, for a bipartite $P_6$-free graph, it is not known whether the
equality $\ab(G) = \cochord(G)$ is true.
However, the previous result shows
that for $s \geq 1$, $\reg(I(G))^s) = 2s + \ab(G) -1 $.

Since forests are weakly chordal bipartite graphs, we derive, from
Corollary \ref{thm:ag=cg}(3), one of the main results of 
Beyarslan, H\`a and Trung:
\begin{corollary}\label{forest}\cite[Theorem 4.7]{selvi_ha} If $G$ is a forest,
  then  for all $s\geq 1$,
$$\reg(I(G)^s)=2s+\ab(G)-1.$$ 
\end{corollary}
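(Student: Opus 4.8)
The plan is to derive this as a direct special case of Corollary \ref{thm:ag=cg}(3), which asserts that $\reg(I(G)^s) = 2s + \ab(G) - 1$ for all $s \geq 1$ whenever $G$ is a weakly chordal bipartite graph. Thus the entire proof reduces to the single combinatorial observation that every forest is a weakly chordal bipartite graph. First I would recall the definitions: a graph is weakly chordal if every induced cycle in both $G$ and $G^c$ has length at most $4$, and a forest is by definition an acyclic graph. I would then verify the two required conditions separately.

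For the bipartiteness, I would note the standard fact that every forest is bipartite; indeed, a graph is bipartite if and only if it contains no odd cycle, and a forest contains no cycle at all. For the weak chordality, I would argue that a forest has no induced cycles whatsoever in $G$ (being acyclic), so the length condition on induced cycles of $G$ is vacuously satisfied. The only point requiring a moment's thought is the condition on $G^c$: one must check that the complement of a forest has no induced cycle of length $5$ or longer. I expect this to be the main (and only genuine) obstacle in the argument, though it is a mild one. The cleanest route is to observe that if $G$ is a forest on at least five vertices, then $G$ is very sparse, so $G^c$ is dense; a short case analysis, or an appeal to the fact that forests are chordal bipartite and hence weakly chordal, settles it. Since the excerpt already records (in the Preliminaries) that a bipartite graph is chordal bipartite if and only if it has no induced cycle on six or more vertices, and chordal bipartite graphs are weakly chordal by definition, I would simply invoke that a forest, having no induced cycle at all, is trivially chordal bipartite and therefore weakly chordal.

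Having established that $G$ is weakly chordal and bipartite, I would then invoke Corollary \ref{thm:ag=cg}(3) to conclude immediately that
\[
\reg(I(G)^s) = 2s + \ab(G) - 1 \quad \text{for all } s \geq 1,
\]
which is exactly the desired statement. In summary, the proof is essentially a definitional reduction: forests sit inside the class of weakly chordal bipartite graphs, and the heavy lifting (the upper bound from Theorem \ref{upper_bound_reg} via the co-chordal cover number, combined with the equality $\ab(G) = \cochord(G)$ for weakly chordal graphs recorded in Remark \ref{ag=cg}(3), against the lower bound $2s + \ab(G) - 1$) has already been carried out. The main conceptual point to flag for the reader is simply the inclusion of forests among weakly chordal bipartite graphs, after which the result follows with no further computation.
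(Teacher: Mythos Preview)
Your proposal is correct and matches the paper's own argument essentially verbatim: the paper simply notes that forests are weakly chordal bipartite graphs and invokes Corollary \ref{thm:ag=cg}(3). Your additional justification that forests are chordal bipartite (hence weakly chordal) is a welcome elaboration, but the strategy is identical.
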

The \textit{bipartite complement} of a bipartite graph
$G$ is the bipartite graph $G^{bc}$ on the same vertex set as $G$, $V(G^{bc})=X \cup Y$,
with $E(G^{bc}) = \{\{x, y\}| x \in X, y \in Y, \{x, y\} \notin E(G)\}$.
Below we make an observation on connected bipartite graphs $G$ with
$\reg(I(G)) = 3$. 

\begin{obs}\label{reg:3}If $G$ is a connected bipartite graph with
$\reg(I(G))=3$, then by \cite[Theorem 3.1]{FOGP}, $2 \leq \ab(G)$.
If $2 < \ba(G)$, then $G^{bc}$ has an induced cycle of length $6$,
which contradicts \cite[Theorem 3.1]{FOGP}. Therefore,
$\ab(G)=\cochord(G) = \ba(G)=2$.
\end{obs}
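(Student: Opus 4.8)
The plan is to pin each of the three invariants to $2$ by sandwiching them between the bounds already recorded in the excerpt, and then to invoke \cite[Theorem 3.1]{FOGP} to kill the remaining slack. First I would collect the inequalities. Applying $(\ref{ag_reg_chg})$ with $\reg(I(G))=3$ gives $\ab(G)\le 2$ and $\cochord(G)\ge 2$, while the bound $\cochord(G)\le\ba(G)$ (coming from the co-chordal cover $\{Z_i\}$ built out of a minimum maximal matching, as recalled before $(\ref{eq:4})$) yields $\ba(G)\ge\cochord(G)\ge 2$. Thus every invariant is already $\ge 2$ except $\ab(G)$, and it remains only to prove $\ab(G)\ge 2$ and $\ba(G)\le 2$; the value $\cochord(G)=2$ will then drop out of $2\le\cochord(G)\le\ba(G)=2$.

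For the lower bound $\ab(G)\ge 2$, I would rule out $\ab(G)=1$. For a bipartite graph $\ab(G)=1$ is equivalent to $\cochord(G)=1$ (the equivalence used in the corollary following $(\ref{eq:3})$), and by Fr\"oberg \cite[Theorem 1]{froberg} the condition $\cochord(G)=1$ is equivalent to $I(G)$ having a linear resolution, i.e.\ to $\reg(I(G))=2$. Since $\reg(I(G))=3$, this forces $\ab(G)\ne 1$, whence $\ab(G)=2$. Alternatively one may simply quote that \cite[Theorem 3.1]{FOGP} already guarantees $\ab(G)\ge 2$ for connected bipartite graphs of regularity $3$.

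The substance is the upper bound $\ba(G)\le 2$, which I would prove by contradiction: suppose $\ba(G)\ge 3$. The target is to extract an induced matching of size $3$ (a $3K_2$) in $G$, which is exactly an induced $6$-cycle in the bipartite complement $G^{bc}$. Indeed, if $x_1y_1x_2y_2x_3y_3$ is an induced $C_6$ in $G^{bc}$, then the only edges of $G$ among $\{x_1,x_2,x_3,y_1,y_2,y_3\}$ are $\{x_1,y_2\},\{x_2,y_3\},\{x_3,y_1\}$, which form a $3K_2$; conversely a $3K_2$ in $G$ produces such a $C_6$, since the crown graph $K_{3,3}$ minus a perfect matching is precisely $C_6$. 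But a $3K_2$ means $\ab(G)\ge 3$, contradicting the value $\ab(G)=2$ found above (equivalently, it contradicts \cite[Theorem 3.1]{FOGP}, which forbids any induced cycle of length $\ge 6$ in $G^{bc}$ when $\reg(I(G))=3$). Hence $\ba(G)\le 2$, and the three invariants coincide at $2$.

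The hard part is manufacturing the $3K_2$ (equivalently the induced $C_6$ in $G^{bc}$) out of the bare hypothesis $\ba(G)\ge 3$. This step is genuinely global and cannot be purely formal, since the implication ``$\ba(G)\ge 3\Rightarrow 3K_2\subseteq G$'' is false in general: for instance $\ba(K_{3,3})=3$ while $\ab(K_{3,3})=1$, and it is only the standing hypothesis $\reg(I(G))=3$ (which excludes $K_{3,3}$, as $\reg(I(K_{3,3}))=2$) that rescues it. Concretely, I expect to feed the structural description of connected bipartite regularity-$3$ graphs supplied by \cite[Theorem 3.1]{FOGP}, namely that $G^{bc}$ is chordal bipartite, into an analysis of a minimum maximal matching: a maximal matching of size $\ge 3$ that admits no smaller maximal replacement should be forced to contain three pairwise non-adjacent edges, producing the $3K_2$. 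Making this extraction precise while respecting the chordal-bipartite constraint on $G^{bc}$ is where I anticipate the real work to lie.
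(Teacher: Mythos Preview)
Your outline follows the paper's own two-line argument exactly: sandwich the invariants, get $\ab(G)\ge 2$ from \cite[Theorem~3.1]{FOGP} (or Fr\"oberg), and then argue that $\ba(G)\ge 3$ forces an induced $C_6$ in $G^{bc}$, i.e.\ a $3K_2$ in $G$, contrary to \cite[Theorem~3.1]{FOGP}. You are right to flag this last implication as the only substantive step, and your instinct that it ``cannot be purely formal'' is correct---but for a stronger reason than you anticipate: the implication is false even under the hypothesis $\reg(I(G))=3$, so no amount of further work will close it. Take $G$ to be $K_{3,3}$ on $\{x_1,x_2,x_3\}\cup\{y_1,y_2,y_3\}$ with a pendant path $y_1\text{--}w_1\text{--}w_2$ attached at $y_1$ (so $w_1$ lies in the $X$-part and $w_2$ in the $Y$-part). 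Then $G$ is connected bipartite and $G^{bc}$ is the disjoint union of two stars centred at $w_1$ and $w_2$, hence acyclic, so \cite[Theorem~3.1]{FOGP} gives $\reg(I(G))=3$. One checks $\ab(G)=2$ (e.g.\ $\{w_1w_2,\,x_1y_2\}$ is an induced $2K_2$, while the $K_{3,3}$ block rules out any $3K_2$) and $\cochord(G)=2$ (cover by $H_1=K_{3,3}\cup\{w_1y_1\}$, which is co-chordal, and $H_2=\{w_1w_2\}$). However, a matching of size two meets at most two of the $x_i$ and at most two of the $y_j$, hence leaves some $K_{3,3}$-edge uncovered and cannot be maximal; thus $\ba(G)=3$. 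So $G^{bc}$ has no induced $C_6$ even though $\ba(G)>2$, and the asserted equality $\ba(G)=2$ is simply wrong.

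What the downstream Corollary~\ref{reg-3} actually needs is only $\ab(G)=\cochord(G)=2$. Your argument already gives $\ab(G)=2$ and $\cochord(G)\ge 2$; the remaining inequality $\cochord(G)\le 2$ cannot be routed through $\ba(G)$ and must be obtained by other means---for instance directly from the chordal-bipartite structure of $G^{bc}$ furnished by \cite[Theorem~3.1]{FOGP}. The $\ba$ claim should be dropped rather than pursued.
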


We now derive two results of 
Alilooee and Banerjee (\cite[Theorems 3.8, 3.9]{banerjee1}) as a
corollary:
\begin{corollary}\label{reg-3}
If $G$ is a connected bipartite graph with
$\reg(I(G)) = 3$, then 
\begin{enumerate}
  \item $\reg(I(G)^{s+1} : e_1\cdots e_s) \leq 3$ for any $s$-fold
	product $e_1\cdots e_s$;
  \item for all $s \geq 1,$ $\reg(I(G)^s) = 2s + 1. $
\end{enumerate}
\end{corollary}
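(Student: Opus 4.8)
The plan is to extract from Observation \ref{reg:3} the combinatorial equalities $\ab(G) = \cochord(G) = \ba(G) = 2$, and then feed these into the general bounds already established for bipartite graphs. Both assertions reduce to short computations once these equalities are in hand, so the corollary is really a harvest of the earlier machinery.

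For part (1), I would observe that $\cochord(G) = 2$ gives $\reg(I(G)) = 3 = \cochord(G) + 1$, so $G$ satisfies hypothesis (2) of Proposition \ref{reg-g-g'}. Applying that proposition directly yields $\reg(I(G)^{s+1} : e_1\cdots e_s) \leq \reg(I(G)) = 3$ for every $s$-fold product $e_1\cdots e_s$. Alternatively, this follows by combining Woodroofe's bound $\reg(I(G')) \leq \cochord(G') + 1$ (\cite[Theorem 1]{russ}) with Corollary \ref{bipartite_chord_com}, since $\cochord(G') \leq \cochord(G) = 2$.

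For part (2), I would specialize the sandwich inequality (\ref{eq:3}). Substituting $\ab(G) = \cochord(G) = 2$ gives
\[
2s + 1 = 2s + \ab(G) - 1 \leq \reg(I(G)^s) \leq 2s + \cochord(G) - 1 = 2s + 1,
\]
forcing $\reg(I(G)^s) = 2s + 1$ for all $s \geq 1$. If one prefers an argument that highlights the colon-ideal mechanism rather than quoting (\ref{eq:3}) wholesale, the upper bound can instead be obtained by induction on $s$: the base case $s = 1$ is $\reg(I(G)) = 3$, and for the inductive step Theorem \ref{regularity_compa} bounds $\reg(I(G)^{s+1})$ by the maximum of $\reg(I(G)^s)$ and $\reg(I(G)^{s+1} : m_\ell) + 2s$ over minimal generators $m_\ell$ of $I(G)^s$; each such generator is an $s$-fold product of edges, so part (1) bounds every colon regularity by $3$, giving $\reg(I(G)^{s+1}) \leq 2s + 3 = 2(s+1) + 1$, which matches the lower bound $2(s+1)+1$ from (\ref{eq:3}).

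There is essentially no genuine obstacle remaining at this stage: the substantive work has been done in the earlier results, principally Theorem \ref{cochord} (which powers Corollary \ref{bipartite_chord_com}) and the induced-matching comparison behind (\ref{eq:3}). The only point requiring care, in the alternative inductive argument, is verifying that the minimal generators of $I(G)^s$ appearing in Theorem \ref{regularity_compa} really are $s$-fold products of edges, so that part (1) is applicable to them; this is immediate from the description of minimal generators of a power of a monomial ideal, but it is precisely the hinge that lets part (1) drive the induction.
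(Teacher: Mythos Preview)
Your proposal is correct and follows essentially the same route as the paper: part (1) is deduced from Observation \ref{reg:3} together with Proposition \ref{reg-g-g'}(2), and part (2) from Observation \ref{reg:3} together with the sandwich inequality (the paper cites (\ref{eq:4}) rather than (\ref{eq:3}), but since $\ab(G)=\cochord(G)=\ba(G)=2$ here, either works identically). Your alternative inductive argument for part (2) via Theorem \ref{regularity_compa} is also fine but unnecessary once the sandwich collapses.
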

\begin{proof}
The first assertion follows from Observation \ref{reg:3} and Proposition
\ref{reg-g-g'} and then second assertion follows
 from Observation \ref{reg:3} and (\ref{eq:4}).
\end{proof}


So far, we had been discussing about graphs $G$ for which
$\reg(I(G)^s) = 2s+\ab(G) - 1$ for all $s
\geq 1$. 
Now we produce some classes of graphs $G$ for which $\cochord(G) -
\ab(G)$ is arbitrarily large and hence $\reg(I(G)^s) - [2s+\ab(G) -
1]$ is also arbitrarily large. If $G$
is the disjoint union of $C_{n_1},\ldots,C_{n_m}$ and $k$ edges, 
then one can easily see that $$\ab(G)=k+\sum_{j=1}^m\left\lfloor
  \frac{n_j}{3}\right\rfloor,$$
\[
  \cochord(G) = \left\{\def\arraystretch{1.2}%
  \begin{array}{@{}c@{\quad}l@{}}
    k+\sum_{j=1}^m\lfloor  \frac{n_j}{3}\rfloor & \text{ if $n_1,\ldots,n_m \equiv
	  \{0,1\} ~(mod~3),$}\\
    k+m+\sum_{j=1}^m \lfloor\frac{n_j}{3}\rfloor & \text{ if $n_1,\ldots,n_m
	\equiv 2 ~(mod~3),$}\\
  \end{array}\right.
\]
and 
\[
  \reg(I(G)) = \left\{\def\arraystretch{1.2}%
  \begin{array}{@{}c@{\quad}l@{}}
    \ab(G)+1 & \text{ if $n_1,\ldots,n_m \equiv \{0,1\} ~(mod~3),$}\\
    \cochord(G)+1 & \text{ if $n_1,\ldots,n_m \equiv 2 ~(mod~3).$}\\
  \end{array}\right.
\]

We prove a precise expression for $\reg(I(G)^s)$ in this case.
We would like to thank Tai H\`{a} for indicating the following proof,
much simpler than the original one.
\begin{theorem}\label{upperbound} Let $G$ be the disjoint union of $C_{n_1},\ldots,C_{n_m}$ and $k$ edges, $k \geq 1$. Then for all $s \geq 1$,
\[
  \reg(I(G)^s) = \left\{\def\arraystretch{1.2}%
  \begin{array}{@{}c@{\quad}l@{}}
    2s+\ab(G)-1 & \text{ if $n_1,\ldots,n_m \equiv \{0,1\} ~(mod~3)$}\\
    2s+\cochord(G)-1 & \text{ if $n_1,\ldots,n_m \equiv 2 ~(mod~3)$}\\
  \end{array}\right.
\]
\end{theorem}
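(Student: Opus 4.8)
The plan is to exploit the disjoint structure of $G$ to reduce everything to the already-known regularity of powers of a single cycle and of a single edge. Write $G = C_{n_1} \sqcup \cdots \sqcup C_{n_m} \sqcup K_2 \sqcup \cdots \sqcup K_2$ with $k$ copies of $K_2$, so that $G$ has $p = m+k$ connected components. The first step is to invoke the reduction formula for the regularity of powers of a disjoint union of graphs,
$$\reg(I(G)^s) = \max\Big\{ \sum_{j=1}^{m}\reg\big(I(C_{n_j})^{a_j}\big) + \sum_{l=1}^{k}\reg\big(I(K_2)^{b_l}\big) - (p-1) \Big\},$$
where the maximum is taken over all integers $a_j, b_l \geq 1$ with $\sum_j a_j + \sum_l b_l = s + p - 1$. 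I would justify this formula from the behaviour of regularity under sums of ideals in disjoint sets of variables (a K\"unneth-type computation over the tensor product of the component polynomial rings); this is the one external input, and supplying a clean justification of it is the main technical point of the argument.

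The second step is to substitute the known values. For a single edge, $I(K_2)^b$ is principal of degree $2b$, so $\reg(I(K_2)^b) = 2b$ for every $b \geq 1$; note this fits the pattern $2b + \ab(K_2) - 1$ since $\ab(K_2) = 1$. For a cycle I would use \cite[Theorem 5.2]{selvi_ha}, which gives $\reg(I(C_n)^a) = 2a + \ab(C_n) - 1$ for all $a \geq 2$, together with the value at $a = 1$: by Jacques' computation \cite{sean_thesis}, $\reg(I(C_n)) = \ab(C_n) + 1 + \epsilon_n$, where $\epsilon_n = 1$ if $n \equiv 2 \pmod 3$ and $\epsilon_n = 0$ otherwise. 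In other words, every cycle component contributes $2a_j + \ab(C_{n_j}) - 1$, with one extra $+1$ arising precisely when its exponent is $a_j = 1$ and $n_j \equiv 2 \pmod 3$.

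The third step is the decisive simplification. Summing the generic term $2a_H + \ab(H) - 1$ over all components $H$, using $\sum_H a_H = s + p - 1$ together with additivity of the induced matching number $\sum_H \ab(H) = \ab(G) = k + \sum_j \lfloor n_j/3 \rfloor$, the bulk terms collapse and the whole bracket becomes
$$2s + \ab(G) - 1 + \#\{\,j : n_j \equiv 2 \pmod 3 \text{ and the exponent of } C_{n_j} \text{ equals } 1\,\}.$$
Thus maximizing amounts to forcing the exponent of every cycle with $n_j \equiv 2 \pmod 3$ to be $1$. Here is where the hypothesis $k \geq 1$ enters, and I expect this to be the conceptual crux: assigning every cycle exponent $a_j = 1$ consumes only $m$ of the budget $s + m + k - 1$, leaving $s + k - 1 \geq k$ units to spread among the $k$ edge-components, each needing at least one, which is feasible exactly because $k \geq 1$ and $s \geq 1$. (With no edges present, for $s \geq 2$ some cycle would be forced to carry exponent $\geq 2$, which is why the pure-cycle case behaves differently.) Hence the correction term attains its maximum value $\#\{\,j : n_j \equiv 2 \pmod 3\,\}$.

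Finally I would split into the two stated cases. If every $n_j \equiv 0, 1 \pmod 3$ the correction term vanishes, giving $\reg(I(G)^s) = 2s + \ab(G) - 1$. If every $n_j \equiv 2 \pmod 3$ the correction term equals $m$; since in this case $\cochord(G) = k + m + \sum_j \lfloor n_j/3 \rfloor = \ab(G) + m$, this yields $\reg(I(G)^s) = 2s + \cochord(G) - 1$. Apart from establishing the disjoint-union reduction formula, the remaining work is careful bookkeeping, the only delicate point being the correct handling of the $a = 1$ versus $a \geq 2$ regularity values for the cycles.
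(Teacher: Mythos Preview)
Your approach is correct and rests on exactly the same external inputs as the paper's proof: the formulas of H\`a--N.\,V.\,Trung--T.\,N.\,Trung and Nguyen--Vu for the regularity of powers of a sum of ideals in disjoint variables, together with the known values of $\reg I(C_n)^a$ from \cite{sean_thesis} and \cite{selvi_ha}. The paper carries this out as a double induction, first on the number $k$ of edge-components and then on the number $m$ of cycles, invoking \cite[Lemma~2.5 and Proposition~2.7]{HTT} for small $s$ and \cite[Theorem~5.7]{nguyen_vu} for larger $s$ at each step; you instead iterate the two-ideal formula once and for all to obtain the $p$-component max formula and reduce the whole computation to a single discrete optimization. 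The two arguments are equivalent, but your packaging has the merit of making the hypothesis $k\geq 1$ completely transparent: it is precisely the feasibility condition allowing every cycle to carry exponent $1$ in the optimizing tuple. One caveat: the equality in your max formula is not a routine K\"unneth computation but genuinely the content of the cited HTT/Nguyen--Vu theorems, and the paper's separate treatment of $s=1$, $s=2$, and $s\geq 3$ reflects how those results are stated in the literature; you should invoke them directly rather than suggest an independent Tor-level derivation.
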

\begin{proof} 
Let $e_1,\ldots,e_k$ be disjoint edges. It follows from \cite[Theorem
7.6.28]{sean_thesis} and \cite[Theorem 5.2]{selvi_ha} that 
\begin{enumerate}
 \item if $n \equiv \{0,1\} (mod~3)$, then
   $\reg(I(C_n)^s)=2s+\ab(C_n)-1$, for all $s \geq 1$;
 \item if $n \equiv 2 (mod~3)$, then $\reg(I(C_n))=\cochord(C_n)+1$ and $\reg(I(C_n)^s)=2s+\ab(C_n)-1$, for all $s \geq 2$.
\end{enumerate}
Let $G_1=C_{n_1} \cup \{e_1\} \cup \cdots \cup \{e_k\}$. First we claim that, for $s \geq 1$
\[
  \reg(I(G_1)^s) = \left\{\def\arraystretch{1.2}%
  \begin{array}{@{}c@{\quad}l@{}}
    2s+\ab(G_1)-1 & \text{ if $n_1 \equiv \{0,1\} ~(mod~3)$}\\
    2s+\cochord(G_1)-1 & \text{ if $n_1 \equiv 2 ~(mod~3)$}\\
  \end{array}\right.
\]
We prove this by induction on $k$. Let $k=1$. Let $H_1=C_{n_1} \cup \{e_1\}$. 
By \cite[Lemma 2.5]{HTT}, we can prove the case $s=1$.
If $n_1 \equiv \{0,1\}(mod~3)$, then by \cite[Theorem 5.7]{nguyen_vu}, for $s \geq 2$, we have 
$$\reg(I(H_1)^s)=2s+\ab(H_1)-1.$$
If $n_1 \equiv 2 (mod~ 3)$, then by \cite[Proposition 2.7]{HTT}, we get
$$\reg(I(H_1)^2)=\reg(I(C_{n}))+\reg(I(e_1)^2)-1=\cochord(H_1)+3.$$
By \cite[Theorem 5.7]{nguyen_vu}, for $s \geq 3$, we have 
$$\reg(I(H_1)^s)=2s+\cochord(H_1)-1.$$ This completes the proof for $k = 1$.
Suppose $k > 1$. Let $G_1 = C_{n_1} \cup \{e_1, \ldots, e_k\}$, where
$e_1, \ldots, e_k$ are disjoint edges. Let $H = C_{n_1} \cup \{e_1,
\ldots, e_{k-1}\}$. By induction hypothesis, for $s \geq 1$,
\[
  \reg(I(H)^s) = \left\{\def\arraystretch{1.2}%
  \begin{array}{@{}c@{\quad}l@{}}
    2s+\ab(H)-1 & \text{ if $n_1 \equiv \{0,1\} (mod~3)$},\\
    2s+\cochord(H)-1 & \text{ if $n_1 \equiv 2 (mod~3)$}.\\
  \end{array}\right.
\]
 Since $G_1 = H \cup \{e_k\}$. By \cite[Lemma 2.5]{HTT} and \cite[Theorem 5.7]{nguyen_vu}, for $s \geq 1$, we have  
 \[
  \reg(I(G_1)^s) = \left\{\def\arraystretch{1.2}%
  \begin{array}{@{}c@{\quad}l@{}}
    2s+\ab(G_1)-1 & \text{if $n_1 \equiv \{0,1\} (mod~3)$},\\
    2s+\cochord(G_1)-1 & \text{if $n_1 \equiv 2 (mod~3)$}.\\
  \end{array}\right.
\]
Let $G_{m-1}= C_{n_1} \cup \cdots \cup C_{n_{m-1}} \cup \{e_1,\ldots,
e_k\}$. Then by induction on $m$, we get, for $s \geq 1$, 
\[
  \reg(I(G_{m-1})^s) = \left\{\def\arraystretch{1.2}%
  \begin{array}{@{}c@{\quad}l@{}}
    2s+\ab(G_{m-1})-1 & \text{ if $n_1,\ldots,n_{m-1} \equiv
	\{0,1\} (mod~3),$}\\
    2s+\cochord(G_{m-1})-1 & \text{ if $n_1,\ldots,n_{m-1} \equiv
  2 (mod~3).$}\\
  \end{array}\right.
\]
Let $G=G_{m-1} \cup C_{n_m}$. By \cite[Lemma 2.5]{HTT}, \cite[Proposition 2.7]{HTT} and \cite[Theorem 5.7]{nguyen_vu},
we have $s \geq 1$,
\[
  \reg(I(G)^s) = \left\{\def\arraystretch{1.2}%
  \begin{array}{@{}c@{\quad}l@{}}
    2s+\ab(G)-1 & \text{ if $n_1,\ldots,n_m \equiv \{0,1\} (mod~3),$}\\
    2s+\cochord(G)-1 & \text{ if $n_1,\ldots,n_m \equiv 2 (mod~3).$}\\
  \end{array}\right.
\]
This completes the proof of the theorem.
\end{proof}
It may be noted that $C_n$ is
not bipartite if $n = 2k+1$ for some $k$, but the upper bound in
Theorem \ref{upper_bound_reg} is still satisfied in this case.
Suppose $H \cong \coprod_{j=1}^m C_{3n_j+2} ~ \coprod ~ \coprod_{i=1}^k
e_i$, then $\ab(H)=k+ \sum_{j=1}^m n_j$ and $\cochord(H) = 
k+m+\sum_{j=1}^m n_j$.
By Theorem \ref{upperbound}, for $s \geq 1$, we have  \
$$\reg(I(H)^s)-[2s+\ab(G)-1]=m.$$
Woodroofe proved that if $H$ is an induced subgraph
of a graph $G$, then $\reg(I(G)) \geq k + m + \sum_{j=1}^m n_j +
1$, \cite[Corollary 11]{russ}. We obtain a similar bound for all the powers.

\begin{corollary}\label{russ_result} If a graph $G$ has an induced subgraph 
 $H \cong   \coprod_{j=1}^m C_{n_j}  \coprod  \coprod_{i=1}^k e_i,$
then
\[
  \reg(I(G)^s) \geq \left\{\def\arraystretch{1.2}%
  \begin{array}{@{}c@{\quad}l@{}}
    2s+(k+ \sum_{j=1}^m \lfloor \frac{n_j}{3}\rfloor)-1 & \text{ if $n_1,\ldots,n_m
	  \equiv \{0,1\} ~(mod~3),$}\\
    2s+(k+m+\sum_{j=1}^m \lfloor \frac{n_j}{3}\rfloor)-1 & \text{ if
	$n_1,\ldots,n_m \equiv 2 ~(mod~3).$}\\
  \end{array}\right.
\]
\end{corollary}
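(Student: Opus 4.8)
The plan is to reduce everything to the exact computation already obtained in Theorem \ref{upperbound} by means of a single monotonicity principle: the regularity of powers of an edge ideal does not increase when one passes to an induced subgraph. Precisely, I would use that
$$\reg(I(G)^s) \geq \reg(I(H)^s) \quad \text{for all } s \geq 1$$
whenever $H$ is an induced subgraph of $G$. This is exactly the mechanism underlying the lower bound $2s + \ab(G) - 1 \leq \reg(I(G)^s)$ of \cite[Theorem 4.5]{selvi_ha} (where $H$ is the disjoint union of the edges of a maximum induced matching); I would apply it in the same generality with $H \cong \coprod_{j=1}^m C_{n_j} \coprod \coprod_{i=1}^k e_i$.

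Granting this, no real computation remains. Since $H$ is itself a disjoint union of cycles together with $k \geq 1$ edges, Theorem \ref{upperbound} applies verbatim to $H$ and gives $\reg(I(H)^s) = 2s + \ab(H) - 1$ when $n_1, \ldots, n_m \equiv \{0,1\} \pmod 3$ and $\reg(I(H)^s) = 2s + \cochord(H) - 1$ when $n_1, \ldots, n_m \equiv 2 \pmod 3$. Substituting the combinatorial values recorded just before Theorem \ref{upperbound}, namely $\ab(H) = k + \sum_{j=1}^m \lfloor n_j / 3 \rfloor$ and $\cochord(H) = k + m + \sum_{j=1}^m \lfloor n_j / 3 \rfloor$ in the respective cases, yields the two displayed lower bounds at once.

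The content, and the step I expect to cost the most, is the monotonicity itself. I would prove it by deleting the vertices of $V(G) \setminus V(H)$ one at a time, so that it suffices to treat a single vertex $x$: here $G \setminus x$ is induced and $I(G \setminus x)^s$ is the image of $I(G)^s$ under the retraction $R \to R/(x)$ sending $x \mapsto 0$ (indeed, the generators of $I(G)^s$ surviving the retraction are exactly the products of $s$ edges avoiding $x$). Thus the whole point is the single-variable inequality $\reg(J|_{x=0}) \leq \reg(J)$ for a monomial ideal $J$. For the non-squarefree ideal $J = I(G)^s$ I would first polarize: polarization preserves regularity by Theorem \ref{polarization}, and under it the retraction $x \mapsto 0$ becomes restriction of the squarefree polarization to a subset of the variables, i.e. passage to an induced subcomplex. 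For squarefree monomial ideals the multigraded Betti numbers of an induced subcomplex form a subset of those of the ambient complex (Hochster's formula), so regularity cannot increase, and the inequality follows.

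One point I would flag in the write-up is that the hypothesis $k \geq 1$ is genuinely needed in the $\equiv 2 \pmod 3$ case, being inherited from Theorem \ref{upperbound}: for a single cycle $C_n$ with $n \equiv 2 \pmod 3$ one has $\reg(I(C_n)^s) = 2s + \ab(C_n) - 1 = 2s + \cochord(C_n) - 2$ for $s \geq 2$, so the sharper $\cochord$-bound would fail without at least one extra edge.
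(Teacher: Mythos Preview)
Your proposal is correct and follows essentially the same approach as the paper: combine Theorem~\ref{upperbound} with the monotonicity of $\reg(I(\cdot)^s)$ under passage to induced subgraphs. The paper's proof is the single line ``Follows immediately from Theorem~\ref{upperbound} and \cite[Corollary~4.3]{selvi_ha}'', the latter being precisely the monotonicity statement you reprove via polarization and Hochster's formula.
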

\begin{proof} Follows immediately from Theorem \ref{upperbound} and \cite[Corollary 4.3]{selvi_ha}.
\end{proof}

Note that if $n_j = 3k_j + 2$ for some $k_j \geq 1, ~ j = 1, \ldots,
m$, then $\reg(I(G)^s) \geq 2s + \cochord(H) - 1$. This is a much
improved lower bound in this class of graphs since $\cochord(H)$ could
be much larger than $\ab(G)$.
\begin{example}\label{ex_reg=cochord} 
Let $K_{1,n}$ be the complete bipartite graph with
partition $\{w\} \cup \{x_1,\ldots,x_n\}$. Let $n = k+m$. Let $G$ be
the graph obtained by attaching a pendant vertex each to $x_1, \ldots, x_k$
and identifying a vertex of $C_{2r_t}$ with $x_t$ for $k+1 \leq t \leq
n$, where $2r_t \equiv 2(mod~3)$.  Let $H$ be induced subgraph of $G$
on $V(G) \setminus \{w\}$. One can easily see that,
$\cochord(H)=\cochord(G)$.
Therefore it follows from Corollary \ref{russ_result} and Theorem
\ref{upper_bound_reg} that
for all $s \geq 1$,
$$\reg(I(G)^s)=2s+\cochord(G)-1.$$
It may also be noted that for this class of graphs $\reg(I(G)^s) -
[2s+\ab(G) - 1] = m$.
\end{example}

There are many classes of graphs $G$ for which the equality
$\reg(I(G)^s) = 2s + b$ has been established, where $b$ is some
combinatorial invariant associated with $G$. For all such results, the
constant term $b$ is either equal to $\ab(G) - 1$ or equal to
$\cochord(G) -1$. While $\ab(G) - 1$ is a lower bound for
$\reg(I(G)^s) - 2s$ for all graphs, $\cochord(G) - 1$ is an upper bound in the case
of bipartite graphs. Moreover, $\cochord(G) - \ab(G)$ can be
arbitrarily large.  We conclude our article with the following
question:

\begin{question}
Does there exist a graph $G$ such that for all $s \gg 0$ 
$$2s + \ab(G) - 1 < \reg(I(G)^s) < 2s + \cochord(G) - 1?$$ 
More generally, let $n = \cochord(G) - \ab(G)$ and $\mathcal{C}_n = \{G
  \mid \cochord(G) - \ab(G) = n \}$. For
  each $t \in \{0,1,\ldots, n\}$, does there exist $G_t \in
  \mathcal{C}_n$ such that $\reg(I(G_t)^s) = 2s + \ab(G) + t - 1?$
\end{question}

\vskip 2mm
\noindent
\textbf{Acknowledgement:} We would like to thank Adam Van Tuyl who pointed
us to the article \cite{selvi_ha}. We would also like to thank Arindam
Banerjee and Selvi Beyarslan for some useful discussions regarding the
materials discussed in this paper. We heavily used the commutative
algebra package, Macaulay 2, \cite{M2}, for verifying whichever
conjectures came to our mind. The third author is funded by
National Board for Higher Mathematics, India. 

\bibliographystyle{plain}  
\bibliography{refs_reg}
\end{document}